\newtheorem{theorem}{Theorem}[section]
\newtheorem{proposition}[theorem]{Proposition}
\newtheorem{lemma}[theorem]{Lemma}
\newtheorem{corollary}[theorem]{Corollary}
\theoremstyle{definition}
\newtheorem{definition}[theorem]{Definition}
\newtheorem{remark}[theorem]{Remark}
\newtheorem{example}[theorem]{Example}
\newcommand{\aaa}{\alpha}
\newcommand{\CCC}{\Gamma}
\newcommand{\DDD}{\Delta}
\newcommand{\id}{{\rm{id}}}
\newcommand{\PP}{\mathbb{P}}
\newcommand{\CC}{\mathbb{C}}
\newcommand{\RR}{\mathbb{R}}
\newcommand{\QQ}{\mathbb{Q}}
\newcommand{\rank}{{\rm{rank}}}
\newcommand{\corank}{{\rm{corank}}}
\newcommand{\Sing}{{\rm{Sing}\,}}
\newcommand{\ol}{\overline}
\newcommand{\lras}{\,\longrightarrow\,}
\newcommand{\set}{\,|\,}
\newcommand{\inv}{^{-1}}
\newcommand{\ms}{\mathscr}
\newcommand{\minus}{\backslash}
\newcommand{\ptl}{\partial}
\newcommand{\qandq}{\quad{\text{and}}\quad}
\renewcommand{\hat}{\widehat}
\renewcommand{\tilde}{\widetilde}
\numberwithin{equation}{section}
\begin{document}
\title[Some remarks on fibrations in complex geometry]{Some remarks on fibrations in complex geometry}
\author{Nobuhiro Honda}\address{Department of Mathematics, Tokyo Institute of Technology, Tokyo, Japan}\email{honda@math.titech.ac.jp}
\author{Jeff Viaclovsky}\address{Department of Mathematics, University of California, Irvine, USA}\email{jviaclov@uci.edu}
\date{April 19, 2025}
\dedicatory{To Gang Tian on the occasion of his 65th birthday}
\begin{abstract} 
In this article, we discuss some properties of holomorphic fibrations in the complex analytic setting.
\end{abstract}
\maketitle

\section{Introduction}

In this article, we will discuss some general properties of holomorphic fibrations and a give some applications. We will also prove a naturality property of the Leray spectral sequence with respect to open inclusions.

\subsection{Holomorphic fibrations}
The main objects of interest in this article are the following. 
\begin{definition}
\label{d:fibration}
Let $Z$ and $Y$ be reduced and irreducible complex spaces satisfying $1 \leq \dim(Y) < \dim(Z)$.  A \textit{fibration} $f: Z \rightarrow Y$ is a proper surjective holomorphic mapping with connected fibers. We say that $Z$ \textit{fibers} over $Y$ and call $Y$ the \textit{base} of the fibration. The \textit{fiber} of $f$ over a point $y$ is the set $f^{-1}(y)$ with the fiber product structure, and is denoted by $Z_y = Z \times_{Y} \mbox{spec} (k(y))$. 
\end{definition}

In Section~\ref{s:fibrations} we will discuss some general properties of fibrations, such as fiber dimension, discriminant locus, monodromy, ordinary points, and non-ordinary points. 
Next, in Section~\ref{s:curve}, we will discuss the case where the base $Y$ is a curve $C$. In this case, if $Z$ is smooth and compact, then the discriminant locus is a finite set, so there are only finitely many singular fibers. For any topological space $X$, we let $\chi(X)$ denote the topological Euler characteristic. The monodromy is defined in Subsection~\ref{ss:monodromy}. We will prove the following estimate on the first betti number $b^1(Z_y)$ of the singular fibers.  
\begin{theorem}
\label{t:t1} Let $f : Z \rightarrow C$ be a fibration, where $Z$ is a compact complex $n$-manifold and $C$ is a smooth curve. Then 
\begin{align}
\label{b1est}
b^1(Z_y) \leq b^1(F)
\end{align}
for \textit{all} $y \in C$, where $F$ is the general fiber. Equality holds in \eqref{b1est} for some $y \in C$ if and only if the local monodromy $\rho^{(1)}_y$ around $y$ is trivial. 

If $n = 2$, then we have the inequality
\begin{align}
\label{eulerineq}
\chi(Z) \geq \chi(F) \chi(C),
\end{align}
with equality if and only if the global monodromy $\rho^{(1)}$ is trivial and every fiber of $f$ is irreducible. 
\end{theorem}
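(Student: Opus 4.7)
Strategy for the first inequality. The statement is local around $y$, so I would pick a small coordinate disk $\Delta \subset C$ with $\ol\Delta \cap \mbox{disc}(f) \subseteq \{y\}$ and set $N = f\inv(\ol\Delta)$. Since $f$ is smooth over $\ol\Delta \setminus \{y\}$ and proper, Ehresmann's theorem makes $\ptl N = f\inv(\ptl\Delta)$ a smooth fiber bundle over $\ptl\Delta \cong S^1$ with fiber $F$, realized as the mapping torus of the local monodromy diffeomorphism whose induced $H^1$-action is $T_y = \rho^{(1)}_y$; a standard proper retraction argument shows $N$ deformation retracts onto $Z_y$, so $H^k(N;\CC) \cong H^k(Z_y;\CC)$. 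By Poincar\'e--Lefschetz duality, $H^k(N,\ptl N;\CC) \cong H_{2n-k}(Z_y;\CC)$, and since $Z_y$ has real dimension at most $2n-2$ we have $H^1(N,\ptl N;\CC) = 0$. The long exact sequence of $(N,\ptl N)$ then yields an injection $r \colon H^1(Z_y) \hookrightarrow H^1(\ptl N)$.

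The Wang sequence for $\ptl N \to S^1$ gives
\[
0 \to \CC \to H^1(\ptl N;\CC) \to H^1(F;\CC)^{T_y} \to 0,
\]
with the left term generated by $f^*\alpha$, the pullback of the standard generator $\alpha \in H^1(S^1;\CC)$. The crux is that $\Image(r) \cap \langle f^*\alpha \rangle = 0$: by naturality of the connecting homomorphism for the map of pairs $(\ol\Delta,\ptl\Delta) \to (N,\ptl N)$, the class $f^*\alpha$ connects to $f^*[\mbox{pt}] = [Z_y]$ in $H^2(N,\ptl N;\CC) \cong H_{2n-2}(Z_y;\CC)$, and this class is nonzero (the sum of fundamental classes of the top-dimensional components of $Z_y$, weighted by multiplicity). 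Hence $f^*\alpha \notin \Image(r)$, and composing $r$ with projection to the quotient yields an injection $H^1(Z_y) \hookrightarrow H^1(F)^{T_y} \subseteq H^1(F)$. This establishes \eqref{b1est}; the equality case forces $H^1(F)^{T_y} = H^1(F)$, equivalently $\rho^{(1)}_y = \id$. The converse (trivial monodromy implies equality) follows from the local invariant-cycle theorem, or from a dimension count in the same exact sequences above.

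For the Euler characteristic statement when $n=2$, additivity of $\chi$ for the stratification $Z = f\inv(C \setminus \mbox{disc}(f)) \sqcup f\inv(\mbox{disc}(f))$, together with multiplicativity of $\chi$ on the smooth fiber bundle over $C\setminus\mbox{disc}(f)$, yields
\[
\chi(Z) = \chi(F)\chi(C) + \sum_{y \in \mbox{disc}(f)} \bigl(\chi(Z_y) - \chi(F)\bigr).
\]
The inequality then reduces to $\chi(Z_y) \geq \chi(F)$ at each $y$. Since $Z$ is a smooth surface, $Z_y^{\mathrm{red}}$ has only plane curve singularities, and Milnor's genus formula gives $\chi(Z_y^{\mathrm{red}}) = 2 - 2p_a(Z_y^{\mathrm{red}}) + \sum_p \mu_p$ with Milnor numbers $\mu_p \geq 0$; combining with the flatness inequality $p_a(Z_y^{\mathrm{red}}) \leq p_a(Z_y) = g(F)$ and the topological identity $\chi(Z_y) = \chi(Z_y^{\mathrm{red}})$ gives the pointwise bound.

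The equality case forces $\chi(Z_y) = \chi(F)$ at each $y \in \mbox{disc}(f)$, which requires all $\mu_p = 0$ and $p_a(Z_y^{\mathrm{red}}) = g(F)$. A short adjunction argument on the smooth surface then forces each such $Z_y$ to be of the form $m_y C_y$ for a single smooth curve $C_y$ of genus $g(F)$, so that every fiber is irreducible. Part 1 applied to each such $Z_y$ identifies every local monodromy $\rho^{(1)}_y$ with the identity, and the final step upgrades triviality of all the local monodromies to triviality of the global representation $\rho^{(1)}$ via the structure of $\pi_1(C \setminus \mbox{disc}(f))$. This last bookkeeping is the most delicate step, and I would expect the paper to handle it by a geometric or Leray-type argument specific to the $n=2$ setting, tying the local data together into a statement about the global monodromy.
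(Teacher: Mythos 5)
Your argument for the first-Betti-number bound \eqref{b1est} is essentially the paper's, just dualized into cohomology: you show $f^*\alpha$ escapes the image of $H^1(N)\to H^1(\partial N)$ by tracking it through the connecting homomorphism, while the paper shows the fiber class in $H_{2n-2}(\partial U)$ has nonzero image in $H_{2n-2}(U)$. Both hinge on the single fact, established in Proposition~\ref{p:def}, that $\sum_j m_j[F_{0,j}]$ is nonzero in $H_{2n-2}(Z_y)$, and then read off the deficit in the Wang/Leray sequence for the boundary circle bundle. So this part is a correct, Lefschetz-dual packaging of the paper's proof.

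For the $n=2$ Euler characteristic inequality, your route to the pointwise bound $\chi(Z_y)\geq\chi(F)$ is considerably heavier than needed, and it misattributes its key input. You invoke Milnor's genus formula together with what you call the ``flatness inequality'' $p_a(Z_y^{\mathrm{red}})\leq p_a(Z_y)=g(F)$. Flatness does give the equality $p_a(Z_y)=g(F)$, but the monotonicity $p_a(D')\leq p_a(D)$ for effective $0<D'\leq D$ supported in a fiber is \emph{not} a consequence of flatness; it is a nontrivial theorem of [BHPV, Lemma~III.8.3] resting on Zariski's lemma, and in fact it is precisely the stronger inequality \eqref{ebetter} that the paper notes as complementary to its own proof. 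The paper gets the pointwise bound with no algebraic geometry at all: since $Z_y$ is a connected $1$-dimensional complex space, $\chi(Z_y)=1-b^1(Z_y)+b^2(Z_y)$, with $b^2(Z_y)$ equal to the number of irreducible components (hence $\geq 1$) and $b^1(Z_y)\leq b^1(F)$ from part~1. The equality analysis is then immediate: $b^2(Z_y)=1$ means irreducible, $b^1(Z_y)=b^1(F)$ means trivial local monodromy by part~1. Your Milnor-number and adjunction detour arrives at the same destination but obscures that part~1 already carries all the work.

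Two further cautions. In part~1 your converse (``trivial monodromy implies equality'') appeals to the local invariant-cycle theorem, but that theorem is not available here without a K\"ahler hypothesis; the paper's Corollary~\ref{c:ict} establishes it only for irreducible central fibers, and your ``dimension count'' alternative is not spelled out. And the final ``upgrade'' you flag---from triviality of all local monodromies to triviality of the global representation $\rho^{(1)}$---is indeed delicate: for $g(C)\geq 1$ the loops around $\mathscr{D}$ do not generate $\pi_1(C\setminus\mathscr{D})$, so trivial local monodromy a priori only shows that $\rho^{(1)}$ factors through $\pi_1(C)$, not that it is trivial. You are right to single this out; the paper itself treats the equivalence as a parenthetical.
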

For $n=2$, this was previously proved in \cite[Chapter~III.11]{BHPV}, but the higher-dimensional case of \eqref{b1est} seems new.  We note that if $Z$ were assumed to be K\"ahler, then the local invariant cycles theorem is true in higher dimensions, in which case Theorem~\ref{t:t1} is known; see \cite{Cl77}. In Subsection~\ref{ss:ict}, we will also give some application to a local invariant cycles theorem in the non-K\"ahler case; see Corollary~\ref{c:ict}. 

In Section~\ref{s:sing}, we will discuss some properties of normal surface singularities. In particular, we will define the notion of a \textit{rational tree singularity}, which is generalization of a rational singularity; see Definition~\ref{d:tsing}.

We next turn our attention to the case where $Z$ is a threefold and $Y$ is a surface. 
In Section~\ref{s:sing}, we will prove the following result in the case $Z$ is a conic bundle over a surface with rational tree singularities. 
\begin{theorem}
\label{t:t2}
 Let $f : Z \rightarrow Y$ be a fibration, where $Z$ is a compact connected threefold and $Y^2$ is a compact surface with rational tree singularities. Assume that the generic fiber $F$ of $f$ is a smooth rational curve.
Then $b^1(Z_y) = 0$ for all $y \in Y$. 
If in addition the fibers of $f$ are equidimensional, then
\begin{align}
\label{t2chi}
\chi(Z) \geq 2 \cdot \chi(Y),
\end{align}
with equality if and only if every fiber is homeomorphic to $\PP^1$. 
\end{theorem}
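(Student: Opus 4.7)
The plan is to handle the two claims in sequence, both via reduction to the curve-base case of Theorem~\ref{t:t1} combined with the resolution structure provided by rational tree singularities of $Y$.

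For the vanishing $b^1(Z_y) = 0$, I would first treat $y$ in the smooth locus of $Y$. The idea is to choose a germ of smooth analytic curve $C \subset Y$ through $y$, extend it to a smooth compact curve in $Y$, and form $W = f^{-1}(C)$; a resolution $\widetilde W \to W$ produces a compact complex surface fibered over $C$ whose general fiber is still $\PP^1$. Theorem~\ref{t:t1} then gives $b^1(\widetilde W_y) \le b^1(\PP^1) = 0$, and since the resolution introduces only smooth rational components into the fiber above $y$, this yields $b^1(Z_y) = 0$. For $y$ a singular point of $Y$, the plan is to pass to a resolution $\pi\colon \widetilde Y \to Y$ with $\pi^{-1}(y) = E$ a tree of smooth rational curves (this is exactly where the rational tree hypothesis enters), pull back and resolve to obtain $\widehat f\colon \widehat Z \to \widetilde Y$, apply the smooth case to each point of $E$, and then glue via a Mayer--Vietoris argument along the tree $E$ to conclude $b^1(\widehat f^{-1}(E)) = 0$. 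The proper surjective map $\widehat f^{-1}(E) \to Z_y$ then gives $b^1(Z_y) = 0$.

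Under the additional assumption that the fibers are equidimensional, each $Z_y$ is a compact connected complex curve, and by the first part $b^1(Z_y) = 0$, so $Z_y$ is topologically a tree of smooth $\PP^1$'s; consequently $\chi(Z_y) \ge 2$, with equality if and only if $Z_y$ is homeomorphic to $\PP^1$. Using additivity of the topological Euler characteristic under constructible stratifications for a proper holomorphic map, I would write
\begin{equation*}
\chi(Z) \;=\; \int_Y \chi(Z_y)\, d\chi(y) \;=\; 2\chi(Y) + \int_Y \bigl(\chi(Z_y) - 2\bigr)\, d\chi(y),
\end{equation*}
where the integrand in the remainder is a non-negative constructible function supported on the discriminant $\Delta \subset Y$.

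The hard part will be showing this remainder integral is non-negative. A priori a non-negative constructible function can have negative Euler integral on a support of negative Euler characteristic, so some genuine input is required. Here I would use the rational tree singularity hypothesis on $Y$ in an essential way—combined with the restrictions on how a $\PP^1$ can degenerate established in the first part of the theorem—to force the discriminant $\Delta$ and the deeper substrata where fiber types further degenerate to be rational tree configurations whose Euler integrals are individually non-negative. Granting this step, I conclude $\chi(Z) \ge 2\chi(Y)$, with equality exactly when the integrand vanishes identically, i.e., when every fiber is homeomorphic to $\PP^1$.
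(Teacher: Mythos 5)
Your proposal takes an entirely different route from the paper, and both main steps have real gaps.

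For the vanishing $b^1(Z_y)=0$, the paper does not slice at an arbitrary point $y$. It slices only at \emph{ordinary} points of the discriminant, where by definition there is a transverse disc $\Delta$ with $f^{-1}(\Delta)$ already \emph{nonsingular}, so no resolution is needed and one can quote the curve-base result directly. At the \emph{non-ordinary} points (a finite set $N$ which can lie in the smooth locus of $Y$) the paper switches to a global argument: it uses the naturality of the Leray spectral sequence under the open inclusion $Y\setminus N\hookrightarrow Y$, shows that the restriction maps on $H^1(Y)$ and $H^2(Y)$ are isomorphisms (this is exactly where the rational-tree-singularity hypothesis enters, via the links being $\QQ$-homology spheres), shows $H^1(Z)\to H^1(Z\setminus f^{-1}(N))$ is injective by Poincar\'e--Alexander--Lefschetz duality, and then applies the four lemma to force the skyscraper sheaf $R^1f_*\RR$ supported on $N$ to vanish. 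Your scheme replaces this with slicing plus resolution at every point. Two concrete problems arise. First, a compact smooth curve through a prescribed point of a general (possibly non-K\"ahler, non-algebraic) compact surface $Y$ need not exist, so the slice $C$ you want to extend may not be available; the paper avoids this because its slices are only local discs, taken at ordinary points. Second, and more seriously, your step ``the resolution introduces only smooth rational components into the fiber above $y$, so $b^1(\widetilde W_y)=0$ yields $b^1(Z_y)=0$'' is not justified. The restriction $\mu\colon \widetilde W_y\to Z_y$ of a resolution of $W=f^{-1}(C)$ is proper surjective, but it is not birational in general, and unless $W$ is normal one cannot invoke Zariski connectedness to get $H^1(Z_y)\hookrightarrow H^1(\widetilde W_y)$ via Leray. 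Since $W$ is merely a divisor in a threefold and can fail to be normal along the fiber, the reduction can genuinely lose information (normalization can separate branches and kill cycles in the fiber). You are using this reduction precisely at the non-ordinary points --- exactly the locus where the paper concludes it cannot argue locally and needs the global spectral-sequence machinery. Your proposed treatment of singular points of $Y$ by resolving $\widetilde Y\to Y$ and Mayer--Vietoris conflates the singular-vs-smooth dichotomy on $Y$ with the ordinary-vs-non-ordinary dichotomy on $\mathscr D$, which is the operative one.

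For the Euler characteristic estimate, your reformulation via Euler integration is equivalent to the paper's stratification $(Y\setminus\mathscr D)\sqcup(\mathscr D\setminus\mathscr N)\sqcup\mathscr N$ and fiber-bundle multiplicativity (the paper uses Proposition~\ref{p:topo} to get a finite $\mathscr N$ over which to control the bundle structure on $\mathscr D\setminus\mathscr N$). You correctly flag that a nonnegative constructible integrand can have negative Euler integral on a locus of negative Euler characteristic, so the inequality is not formal. But the fix you propose --- that the rational-tree hypothesis on $Y$ would force $\mathscr D$ and its deeper strata to be ``rational tree configurations'' --- does not hold: the discriminant of a conic bundle over a smooth surface (where the rational-tree condition is vacuous) can be a curve of arbitrarily high genus. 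So the step you identify as ``the hard part'' remains unproved, and the argument you propose to fill it does not go through.
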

In the case of equality in \eqref{t2chi} it is tempting to speculate that every fiber would have to be a smooth $\PP^1$, 
but we are unable to make such a strong conclusion with our methods. 
\begin{remark}If $Z$ and $Y$ were assumed to be projective with $Y$ only having rational singularities, then this result can be seen to follow from \cite[Theorem~7.1]{KollarI} (where even stronger results are obtained). We emphasize that in Theorem~\ref{t:t2}, we do not make any projectivity or K\"ahlerian assumption. 
\end{remark}
The main tool in the proof of Theorem~\ref{t:t2} is a certain naturality property of the Leray spectral sequence, which we discuss next. 

\subsection{Naturality of Leray spectral sequence}
Let $M$ and $N$ be topological spaces and $f : M \rightarrow N$ a continuous mapping. 
Given a sheaf of Abelian groups $\mathscr{S}$ on $M$, there is the well-known Leray spectral sequence with
\begin{align}
E_2^{p,q} = H^p(N, R^q f_* \ms{S}) \Longrightarrow H^{p+q}(M, \ms{S}). 
\end{align}
If $N' \subset N$ is an open subset, we let $M' = f^{-1}(N')$, and denote the restrictions by $f' : M' \rightarrow N'$ and $\ms{S}' = \ms{S}|_U$. We also have a Leray spectral sequence 
\begin{align}
{E}_2'^{p,q} = H^p(N', R^q f'_* \ms{S}') \Rightarrow H^{p+q}(M', \ms{S}'). 
\end{align}
In Section \ref{s:Leray} we will prove the following naturality property of the Leray spectral sequence.
\begin{theorem}\label{t:Leray}
For any $p\ge 0$ and $q\ge 1$, there exists a commutative diagram 
\begin{equation}
\begin{CD}\label{}
H^p\big(N,R^qf_*\ms S\big) @>{d_2}>> H^{p+2}\big(N,R^{q-1}f_*\ms S\big)\\
@VVV @VVV\\
H^p\big(N',R^qf'_*\ms S'\big) @>{d'_2}>> H^{p+2}\big(N',R^{q-1}f'_*\ms S'\big)
\end{CD}
\end{equation}
where $d_2$ and $d'_2$ are differentials in the second terms in the Leray spectral sequences associated with the pairs $(\ms S,f:M\rightarrow N)$ and $(\ms S',f':M'\rightarrow N')$ respectively, and the vertical mappings are the restriction mapping from $N$ to $N'$ composed with the isomorphism $(R^if_*\ms S)'\simeq R^if'_*\ms S'$ for any $i$ as in Proposition \ref{p:002}.
\end{theorem}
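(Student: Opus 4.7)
The plan is to produce a morphism of the entire Leray spectral sequences associated to $(f,\ms S)$ and $(f',\ms S')$, from which commutativity of the $d_2$ squares (and in fact all $d_r$ squares) will follow as a formal consequence. The construction goes via the standard realization of the Leray spectral sequence as the Grothendieck spectral sequence for the composition $\Gamma(N,-)\circ f_*=\Gamma(M,-)$, using injective and Cartan--Eilenberg resolutions.

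First I would choose an injective resolution $\ms S\to\ms I^\bullet$ of sheaves of abelian groups on $M$. Since the restriction of an injective abelian sheaf to an open subset is again injective, $\ms I^\bullet|_{M'}$ is an injective resolution of $\ms S'$ on $M'$. Pushing forward gives a complex $f_*\ms I^\bullet$ on $N$ whose cohomology sheaves are $R^q f_*\ms S$; because $f_*$ commutes with restriction to open subsets on the base, one has $(f_*\ms I^\bullet)|_{N'}=f'_*(\ms I^\bullet|_{M'})$, and taking cohomology sheaves on this restricted complex recovers exactly the isomorphism $(R^q f_*\ms S)|_{N'}\simeq R^q f'_*\ms S'$ of Proposition~\ref{p:002}.

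Next I would choose a Cartan--Eilenberg resolution $f_*\ms I^\bullet\to \ms J^{\bullet,\bullet}$ by injective sheaves on $N$. Again using that restriction preserves injectivity and is exact, $\ms J^{\bullet,\bullet}|_{N'}$ is a Cartan--Eilenberg resolution of $(f_*\ms I^\bullet)|_{N'}=f'_*(\ms I^\bullet|_{M'})$. Applying $\Gamma(N,-)$ and $\Gamma(N',-)$ to these two double complexes, together with the natural restriction-of-sections map $\Gamma(N,-)\to\Gamma(N',(-)|_{N'})$, yields a morphism of double complexes and hence a morphism of the filtered spectral sequences they produce. These are precisely the two Leray spectral sequences of the statement, so on the $E_2$-page the restriction intertwines $d_2$ and $d'_2$, which is the desired commutative square.

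The main obstacle is the bookkeeping needed to confirm that the induced map on $E_2$-terms agrees with the composition described in the statement, namely the restriction of sections on $N$ to $N'$ followed by the identification $(R^i f_*\ms S)|_{N'}\simeq R^i f'_*\ms S'$. This amounts to tracking the standard identification of the $E_2$-page of the Grothendieck spectral sequence through the construction above and checking compatibility with the restriction of the Cartan--Eilenberg resolution; none of this is conceptually difficult, but it is where the work lies. Once this is done, the same argument in fact yields commutativity for every $d_r$ with $r\ge 2$, so the theorem as stated for $d_2$ is the $r=2$ case of a stronger naturality statement.
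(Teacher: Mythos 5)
Your proposal is correct, but it takes a genuinely different route from the paper. The paper builds the Leray spectral sequence from the canonical (Godement) resolution $\ms A^\bullet(\ms S)$ of flabby sheaves, following Gunning: it sets up the double complex $A^{p,q}=\CCC\big(N,\ms A^p\big(f_*\ms A^q(\ms S)\big)\big)$, proves three preparatory propositions showing that the canonical resolution, the direct image functor, and the resulting differentials are all compatible with restriction to an open subset, and then invokes the general fact from Gunning that a morphism of double complexes induces a morphism of the associated spectral sequences. You instead run the Grothendieck spectral sequence construction for $\Gamma(N,-)\circ f_*$: an injective resolution of $\ms S$ on $M$, a Cartan--Eilenberg resolution of $f_*\ms I^\bullet$ on $N$, and the observations that restriction to an open subset is exact and preserves injectives (since $j_!$ is an exact left adjoint of $j^*$), hence sends injective resolutions to injective resolutions and Cartan--Eilenberg resolutions to Cartan--Eilenberg resolutions. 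The trade-off: the paper's Godement-resolution route is more elementary and entirely self-contained relative to a single reference, avoiding Cartan--Eilenberg resolutions and the lemma about injectives restricting to injectives; your route is shorter and more modular, packaging the compatibility into standard homological-algebra facts rather than bespoke sheaf-level propositions. The paper itself flags your route in its final remark (``this naturality can also be seen to follow from naturality of the Grothendieck spectral sequence''), so the authors were aware of it and deliberately chose the explicit construction. Both approaches give naturality of all the $d_r$, not just $d_2$, and both ultimately require the same bookkeeping you identify --- checking that the induced map on the $E_2$-page agrees with the restriction-plus-identification map in the statement --- which the paper carries out by ``faithfully following'' the Gunning identifications; this is where the work lies in either version, and your description of it as bookkeeping rather than a conceptual gap is accurate.
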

While this is a known folklore result, we could not find any elementary explanation in the literature, so we provide a detailed proof. 

There is a $5$-term exact sequence associated with the Leray spectral sequence 
\begin{align}
0 \lras E_2^{1,0} \lras H^1(M,\ms S) \lras E_2^{0,1} 
\overset{d_2}{\lras} E_2^{2,0} \lras H^2(M, \ms S);
\end{align}
see \cite[Chapter~5]{Weibel}. The above result implies the following corollary.
\begin{corollary}
\label{c:5term} 
In the above setting, we have a commutative diagram 
\begin{equation}
\begin{tikzcd} 
 E_2^{1,0} \arrow[r] \arrow[d] &  H^1(M,\ms S) \arrow[r] \arrow[d] &  E_2^{0,1} 
\arrow[r,"d_2"] \arrow[d]  & E_2^{2,0} \arrow[r] \arrow[d] & H^2(M, \ms S) \arrow[d]\\
E_2'^{1,0} \arrow[r]  &  H^1(M',\ms S') \arrow[r]  &  E_2'^{0,1} 
\arrow[r,"d_2'"]  & E_2'^{2,0} \arrow[r]  & H^2(M', \ms S'),
\end{tikzcd}
\end{equation}
where the vertical maps are those induced by restriction. 
\end{corollary}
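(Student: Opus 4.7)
My plan is to deduce the corollary from Theorem~\ref{t:Leray} together with the standard construction of the 5-term exact sequence. Recall that for any first-quadrant spectral sequence $\{E_r^{p,q}\}\Rightarrow H^{p+q}$, degree reasons give $E_\infty^{1,0}=E_2^{1,0}$, $E_\infty^{0,1}=\ker(d_2\colon E_2^{0,1}\to E_2^{2,0})$, and $E_\infty^{2,0}=\mathrm{coker}(d_2)$. Splicing the short exact sequence $0\to E_\infty^{1,0}\to H^1\to E_\infty^{0,1}\to 0$ coming from the Leray filtration on $H^1$ with the differential $d_2$ and the inclusion $E_\infty^{2,0}\hookrightarrow H^2$ yields the 5-term sequence. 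In particular, the middle square of the corollary (with the two $d_2$'s) is exactly Theorem~\ref{t:Leray} applied with $(p,q)=(0,1)$.

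For the four outer squares I would invoke the formal principle that edge homomorphisms, and hence the entire 5-term sequence, are functorial with respect to morphisms of spectral sequences. It therefore suffices to observe that the vertical maps of Theorem~\ref{t:Leray} assemble into a morphism of the full Leray spectral sequences: the sheaf-theoretic restriction $\ms S\to j_*\ms S'$ (for the open inclusion $j\colon M'\hookrightarrow M$), combined with the canonical isomorphisms $(R^if_*\ms S)'\simeq R^if'_*\ms S'$ from Proposition~\ref{p:002}, yields compatible maps on the $E_2$-pages and on the filtered abutments $H^\bullet(M,\ms S)\to H^\bullet(M',\ms S')$, the latter being the usual restriction of cohomology classes. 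Granting this, the four outer squares commute directly from the definition of the edge homomorphisms in terms of the filtration on $H^\bullet$ and the identifications of $E_\infty^{p,q}$ recalled above.

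The main obstacle, and the place where the actual content enters, is verifying that the vertical maps of Theorem~\ref{t:Leray} really do extend to a morphism of spectral sequences compatible with the filtrations on the abutments, not merely with $d_2$. I expect this to follow with no extra work from the construction used to prove Theorem~\ref{t:Leray}: the resolution and associated double complex employed there fit into a commutative diagram under restriction, and the induced map of total complexes manifestly preserves the filtration by base degree that gives rise to both the Leray spectral sequence and the Leray filtration on $H^\bullet$. Once this observation is in hand, the commutativity of the four remaining squares in Corollary~\ref{c:5term} is purely formal.
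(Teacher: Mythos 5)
Your proposal is correct and follows essentially the same route as the paper: the paper's proof of Theorem~\ref{t:Leray} already constructs a full morphism of double complexes under restriction (Proposition~\ref{p:003}) and hence, via \cite[Corollary~C.15]{GunningIII}, a morphism of the associated spectral sequences with compatible abutments, so the corollary is an instance of the formal functoriality of the five-term exact sequence that you spell out. The "main obstacle" you flag — extending the $E_2$-level commutativity to a morphism of filtered spectral sequences — is not an extra step here, as the paper obtains Theorem~\ref{t:Leray} precisely by first constructing such a morphism and only then identifying the $E_2$-page with the Leray $E_2$-page; the paper simply records the corollary as an immediate consequence.
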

Some applications of this commutativity will be given in Section~\ref{s:tfc}; see Theorem~\ref{t:tf1}.

\subsection{Acknowledgements}  The first author was partially supported by JSPS KAKENHI Grant 22K03308. The second author was partially supported by NSF Grants DMS-2105478, DMS-2404195, and a Fellowship from the Simons Foundation.  The second author would like to thank Gang Tian for his collaboration, friendship, and generosity throughout the years since they first met over 25 years ago.

\section{Holomorphic fibrations}
\label{s:fibrations}
Let $f: Z \rightarrow Y$ be a fibration as in Definition~\ref{d:fibration}. In this section, we will review some fundamental definitions and properties of $f$. 
\subsection{Fiber dimension}
\label{ss:fd}
We begin with the following basic result on the dimension of the fibers of $f$. As in \cite{U75}, we define  $\dim(f) = \dim(Z) - \dim(Y)$.

\begin{proposition} If $Z$ is irreducible, then $\dim_z(Z_y) \geq \dim(f)$ for all $y \in Y$ and all $z \in Z_y$. Furthermore, the set of $y \in Y$ for which there exists $z \in f^{-1}(y)$ with $\dim_z(Z_y) > \dim(f)$ is a subvariety of $Y$ of codimension at least $2$.
\end{proposition}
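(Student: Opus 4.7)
The plan is to handle the two assertions separately, relying on three classical facts from complex-analytic geometry: a Krull-style dimension drop under cutting by equations, upper semicontinuity of fiber dimension, and Remmert's proper mapping theorem (together with the generic fiber dimension theorem for a surjective proper map of irreducible complex spaces).

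For the lower bound $\dim_z(Z_y) \geq \dim(f)$ I would argue locally. Since $Y$ is irreducible, $\dim_y Y = \dim Y$ for every $y$, so we can choose holomorphic functions $g_1, \dots, g_m$ on a neighborhood of $y$ in $Y$ with $m = \dim Y$ whose common zero locus reduces to $\{y\}$ set-theoretically near $y$ (a system of parameters in $\ms O_{Y,y}$). The fiber $Z_y$ is then cut out near $z$ by the $m$ equations $f^* g_1 = \cdots = f^* g_m = 0$, and because each successive equation drops the local dimension by at most $1$, we obtain
\begin{align}
\dim_z Z_y \geq \dim_z Z - m = \dim Z - \dim Y = \dim(f).
\end{align}

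For the second assertion, upper semicontinuity of fiber dimension shows that
\begin{align}
E = \{ z \in Z : \dim_z Z_{f(z)} \geq \dim(f)+1 \}
\end{align}
is analytic in $Z$, and Remmert's proper mapping theorem then makes $f(E) \subset Y$ analytic; by construction this is exactly the locus described in the proposition. Suppose, for contradiction, that $f(E)$ contains an irreducible component $D$ with $\dim D = \dim Y - 1$. Set $W = f^{-1}(D)$ and consider $E \cap W$. For every $y \in D$ the fiber $E \cap Z_y$ is nonempty (since $D \subseteq f(E)$) and, for any of its points $z$, the irreducible component of $Z_y$ through $z$ has dimension $\geq \dim(f)+1$ and is itself contained in $E$, so $\dim(E \cap Z_y) \geq \dim(f) + 1$. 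Picking an irreducible component $A$ of $E \cap W$ that maps onto $D$, the generic fiber dimension theorem applied to $f|_A : A \to D$ yields
\begin{align}
\dim A \geq \dim D + \dim(f) + 1 = (\dim Y - 1) + (\dim Z - \dim Y) + 1 = \dim Z.
\end{align}
Since $A \subseteq Z$ and $Z$ is irreducible, this forces $A = Z$, hence $E = Z$. But then every fiber of $f$ would have dimension $> \dim(f)$, contradicting the generic fiber dimension theorem applied to $f$ itself.

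The main obstacle is bookkeeping: making sure the components of $E$ and $E \cap W$ line up so that the count ``$\dim Y - 1 + (\dim(f) + 1) = \dim Z$'' can be carried out honestly, given that $E$ need not be irreducible. Granting the standard generic fiber dimension theorem in the analytic category, both halves of the proposition reduce to routine dimension counts.
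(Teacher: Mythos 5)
The paper itself offers no argument for this proposition; it simply cites Remmert and Fischer. So there is no internal proof to compare against, and I will evaluate your proof on its own terms.

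Part (a) is correct: using a system of parameters $g_1,\dots,g_m$ at $y$ (which exists because $\ms O_{Y,y}$ has Krull dimension $\dim_y Y=\dim Y$, by irreducibility of $Y$) and the active lemma (each equation cuts local dimension by at most one) gives
\begin{align}
\dim_z Z_y \geq \dim_z Z - m = \dim Z - \dim Y,
\end{align}
where $\dim_z Z=\dim Z$ again by irreducibility of $Z$. This is the standard argument.

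For part (b), the overall scheme is right, but the step you yourself flag as ``bookkeeping'' is a genuine gap as written, and it occurs at the key inequality. Knowing that $\dim(E\cap Z_y)\geq \dim(f)+1$ for all $y\in D$ does \emph{not} let you conclude $\dim A\geq\dim D+\dim(f)+1$ for an \emph{arbitrary} irreducible component $A$ of $E\cap W$ surjecting onto $D$: the fiber-dimension theorem says the generic fiber of $f|_A$ is the \emph{smallest} fiber, so you need the generic fiber of $A\to D$ itself (not of $E\cap W\to D$) to have dimension $\geq\dim(f)+1$, and a priori that excess dimension could be carried by a different component. The fix uses the observation you already made: pick $z$ a general point of $A$, meaning $z$ lies on no other component of $E\cap W$ and is a point where the fiber dimension of $f|_A$ attains its generic value. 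Let $C$ be an irreducible component of $Z_{f(z)}$ through $z$ with $\dim C\geq\dim(f)+1$. You showed $C\subseteq E$; also $C\subseteq W$, so $C\subseteq E\cap W$. Since $C$ is irreducible and the components of $E\cap W$ are locally finite, $C$ must lie inside a single component, and since $C\ni z$ and $z$ is general in $A$, that component is $A$ itself. Hence $\dim_z(A\cap Z_{f(z)})\geq\dim(f)+1$, which \emph{is} the generic fiber dimension of $f|_A$, and the count $\dim A\geq\dim D+\dim(f)+1\geq\dim Z$ now goes through. (The existence of a component $A$ of $E\cap W$ surjecting onto $D$ also deserves a sentence — one gets it from properness of $f$, local finiteness of components, and irreducibility of $D$, since a finite union of proper analytic subsets cannot cover an open set.) With that repair, your argument is a correct, self-contained proof of a statement the paper only references.
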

\begin{proof}
This is proved in \cite{Remmert1957}; see also \cite[Chapter~3]{Fischer}.
\end{proof}
If all irreducible components of $Z_y$ are of dimension $\dim(f)$ for all $y \in Y$, then we say that $f$ has \textit{equidimensional} fibers. 
We mention that if $Z$ is Cohen-Macaulay, $Y$ is regular, and $f$ has equidimensional fibers, then $f$ is flat. This is known as ``miracle flatness''; see  \cite[Exercise III.10.9]{Hartshorne} or \cite[Theorem~23.1]{Matsumura}. However, we will not directly use any flatness conditions in the present article.

\subsection{Discriminant locus}
\label{ss:dl}
Next, we have the following result which defines the discriminant locus of a fibration. 
\begin{proposition} 
\label{p:disc}
Let $f : Z \rightarrow Y$ be a fibration such that $f(\Sing Z) \neq Y$. 
Then there exists a analytic subset $\mathscr{D} \subset Y$ of codimension at least $1$ in $Y$ with $\Sing Y \subset \mathscr{D}$
and $\Sing Z \subset f^{-1}( \ms{D})$, called the {\textit{discriminant locus}} of $f$, such that the restriction 
\begin{align}\label{fb1}
f' : Z\setminus f^{-1}(\ms{D})  \rightarrow Y\setminus  \ms{D}
\end{align}
is a smooth submersion.
\end{proposition}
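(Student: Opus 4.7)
Proof proposal. The plan is to build $\mathscr D$ as the union of three analytic subsets of $Y$, each of codimension at least one, corresponding to the singular locus of $Y$, the image of the singular locus of $Z$, and the image of the critical locus of $f$ on the bi-smooth stratum; then the containments and the submersion property follow by construction.

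I would proceed in three steps. Step 1: Since $Y$ is reduced and irreducible, $\mathscr D_1 := \Sing Y$ is analytic of codimension $\geq 1$. Step 2: Since $\Sing Z$ is analytic in $Z$ and $f$ is proper, Remmert's proper mapping theorem yields that $\mathscr D_2 := f(\Sing Z)$ is analytic in $Y$, and the hypothesis $f(\Sing Z)\ne Y$ together with irreducibility of $Y$ forces $\mathrm{codim}_Y \mathscr D_2 \geq 1$. Step 3: Set $Z^\circ := (Z\setminus \Sing Z)\cap f^{-1}(Y\setminus \Sing Y)$, an open complex manifold, and view $f^\circ := f|_{Z^\circ}$ as a holomorphic map of complex manifolds into $Y\setminus \Sing Y$. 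The critical locus $C := \{z\in Z^\circ : \rank(df^\circ_z) < \dim Y\}$ is analytic in $Z^\circ$, being locally the vanishing locus of the $(\dim Y)\times(\dim Y)$ minors of the Jacobian of $f$. Let $\hat C$ be its closure in $Z$; assuming (see the last paragraph) that $\hat C$ is analytic in $Z$, set $\mathscr D_3 := f(\hat C)$. Then $\mathscr D_3$ is analytic in $Y$ by properness of $f$, and since each irreducible component of $\hat C$ is the closure of an irreducible component of $C$, lower semi-continuity of rank implies that the generic rank of $f$ on each such component is $<\dim Y$; the Remmert rank theorem then gives $\dim f(\hat C)<\dim Y$, i.e., $\mathrm{codim}_Y \mathscr D_3 \geq 1$.

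Setting $\mathscr D := \mathscr D_1\cup\mathscr D_2\cup\mathscr D_3$, the conclusions follow: $\Sing Y\subset \mathscr D$ and $\Sing Z\subset f^{-1}(\mathscr D)$ by construction, and on $Z\setminus f^{-1}(\mathscr D)$ both the source and the target are smooth while $df$ has maximal rank, so $f'$ in \eqref{fb1} is a holomorphic submersion between complex manifolds, which is what is meant by a smooth submersion.

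The main technical obstacle is the global analyticity of $\hat C$ as a subset of $Z$, not merely of $Z^\circ$. I would handle this via a coherent-sheaf description: the sheaf $\Omega^1_{Z/Y}$ on $Z$ is coherent, and the locus where it fails to be locally free of rank $\dim(f)$ is cut out by Fitting ideals, hence is analytic throughout $Z$; together with $\Sing Z \cup f^{-1}(\Sing Y)$, this is exactly the closed ``non-submersion'' subset of $Z$ whose $f$-image can be taken to be $\mathscr D$. Alternatively, one works locally in analytic charts and extends the minor-vanishing equations for $C$ across $\Sing Z \cup f^{-1}(\Sing Y)$ to produce local analytic extensions that glue to $\hat C$ globally. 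Once this is granted, the rest of the argument is routine.
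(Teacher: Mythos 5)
Your proposal is correct and follows essentially the same route as the paper: both decompose $\ms D$ into $\Sing Y$, $f(\Sing Z)$, and the image of a globally analytic critical set extending the non-submersion locus of the bi-smooth stratum, with Remmert's proper mapping theorem and irreducibility of $Y$ giving analyticity and properness. The only real difference is how the global analyticity of that critical set is established: the paper invokes the corank stratification $\{z : \corank_z(f) > \dim(f)\}$ and cites \cite[Theorem~L.4]{GunningII}, whereas you sketch the equivalent Fitting-ideal description of $\Omega^1_{Z/Y}$, which is essentially a proof of that cited fact.
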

\begin{proof} 
If $Y$ and $Z$ were non-singular, then the critical locus of a surjective holomorphic mapping $f:Z\rightarrow Y$ is defined as the set 
\begin{align}\label{clc}
\ms C \equiv \big\{z\in Z\set \rank\, (df)_z < \dim Y\big\},
\end{align}
and the discriminant locus of $f$ is defined as $f(\ms C)$, which is an analytic subset of $Y$ by the Remmert proper mapping theorem; see \cite[p.~213]{GR}.

Let $\ms{D}_0 = \Sing(Y)$, which is a proper subvariety of $Y$; see \cite{GR}.  
Similarly $\Sing(Z)$ is a proper subvariety of $Z$, and by the Remmert proper mapping theorem, $\ms{D}_1 = f(\Sing(Z))$ is a subvariety of $Y$. By the assumption $\ms{D}_1$ must be a proper subvariety of $Y$. So we consider the restricted mapping 
\begin{align}
f : Z \setminus f^{-1} (\ms{D}_0 \cup \ms{D}_1) 
 \rightarrow Y \setminus ( \ms{D}_0 \cup \ms{D}_1) .
\end{align}
Since we have removed all the singular points,  this is a mapping between smooth manifolds. 
So $f$ will be a submersion away from a proper subvariety $\ms{D}_2$ of 
$Y \setminus ( \ms{D}_0 \cup \ms{D}_1)$.  
Then $\ms{D} = \ms{D}_0 \cup \ms{D}_1 \cup \ms{D}_2$ is the required set, provided that it is a subvariety of $Y$. 

To show that $\ms{D}$ is a subvariety of $Y$, we argue as follows. 
Near $z \in Z$, we can locally identify a neighborhood of $z$ with a subvariety of $\CC^N$ and similiary near $f(z)$. Then $df: T_z Z \rightarrow T_{f(z)} Y$ is still defined as a mapping between the Zariski tangent spaces. Instead of considering \eqref{clc}, one instead defines the \textit{corank} of $f$ at $z$ to be  
\begin{align}
\corank_z(f) = \dim(T_z Z) - \rank(df)_z.
\end{align}
Then the critial locus is defined to be 
\begin{align}
\ms C \equiv \{ z \in Z \set \corank_z(f) > \dim(f) \}.
\end{align} 
It is shown in \cite[Theorem~L.4]{GunningII} that this is an analytic subset of $Z$, so $\ms D_3 = f(\ms C)$ is an analytic subvariety of $Y$ by the Remmert proper mapping theorem. 
Clearly, we have that $\ms D_3 \setminus (\ms D_0 \cup \ms D_1) = \ms D_2$, which implies that 
$\ms{D} = \ms{D}_0 \cup \ms{D}_1 \cup \ms{D}_3$ is a subvariety of $Y$. 
\end{proof}

\begin{remark} By Ehresmann's fibration theorem, the mapping \eqref{fb1} is a smooth fiber bundle; see \cite{Eh51}. Since $\mathscr{D}$ is an proper analytic subvariety of $Y$ and $Y$ is irreducible, 
the complement $Y\minus \mathscr{D}$ is connected by \cite[p.~145]{GR}, which implies that all fibers of the mapping \eqref{fb1} are diffeomorphic. In this case, we will refer to a fiber of the mapping in \eqref{fb1} as a \textit{general fiber}, and usually denote the general fiber by $F$.  
\end{remark}
In Section~\ref{s:tfc}, we will also require the following result. 
\begin{proposition}
\label{p:topo}
Suppose that $Z$ and $Y$ are compact and $\dim(f) = 1$. Then there exists an analytic subset $A\subsetneq\ms D$ such that $f:
f\inv(\ms D\setminus A)\lras \ms D\setminus A$ is topologically a fiber bundle mapping.
\end{proposition}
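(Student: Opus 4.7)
My plan is to invoke Thom's first isotopy theorem applied to a Whitney stratification adapted to $f$. The theorem of Whitney--Thom--Mather--Hironaka--Verdier for proper complex analytic maps furnishes Whitney stratifications of $Z$ and $Y$ containing $\Sing(Z)$, $\Sing(Y)$, $f^{-1}(\ms D)$, and $\ms D$ as unions of strata, with the property that $f$ is a \emph{stratified map}, i.e.\ each stratum of $Z$ is mapped submersively onto a stratum of $Y$, and moreover Thom's $a_f$-condition is satisfied on the resulting partition. I would begin by recording this input, which is the only non-elementary ingredient in the argument.

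I would then define $A\subset\ms D$ to be the union of all strata of $Y$ contained in $\ms D$ whose dimension is strictly less than $\dim\ms D$. This is a closed analytic subset of $Y$, and since the top-dimensional strata of $\ms D$ form an open dense subset of $\ms D$, we have $A\subsetneq\ms D$. The complement $\ms D\setminus A$ is a smooth manifold, namely the disjoint union of the top-dimensional strata of $\ms D$. Restricting, $f:f^{-1}(\ms D\setminus A)\to \ms D\setminus A$ is proper (because $Z$ is compact) and, by construction, each stratum of the source maps submersively onto a full stratum of $\ms D\setminus A$. Thom's first isotopy theorem then produces, around every point of $\ms D\setminus A$, a local topological trivialization of $f$ that is compatible with the stratification of the source; this is precisely the assertion that $f$ is a topological fiber bundle mapping over $\ms D\setminus A$.

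The main obstacle is the stratification input itself: one must cite the existence of a Whitney stratification adapted to the proper analytic map $f$ which satisfies Thom's $a_f$-condition, so that Thom's first isotopy theorem may be applied. This is standard but technical, and in writing the proof I would give a precise reference (for instance to Verdier's original treatment, or to a textbook account such as Gibson--Wirthm\"uller--du Plessis--Looijenga). Once this machinery is granted, the hypothesis $\dim(f)=1$ is not actually invoked in the argument, and the same proof yields the corresponding statement in arbitrary relative dimension; the restriction to $\dim(f)=1$ reflects only the intended application in Section~\ref{s:tfc}.
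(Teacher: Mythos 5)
Your proposal is correct in substance but takes a genuinely different and more machinery-heavy route than the paper. The paper's proof is entirely elementary: it works with the reduction of $X = f^{-1}(\ms D)$, passes to its normalization $\tilde X$, uses the canonical factorization $\tilde X \to X' \to X$ through the ``space of prime germs'' $X'$ of $X$ (following Grauert--Remmert), and then carefully analyzes what happens over the non-normal locus $N$ of $X$. The hypothesis $\dim(f)=1$ is used essentially: it forces the restriction of $f'$ to the purged non-normal locus $\hat N'$ to be a finite unramified covering over $\ms D$ away from a proper subvariety, and it is this covering structure that allows the paper to descend the fiber bundle structure from $X'$ back down to $X = X'/\nu'$. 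The only external ingredients are the Remmert proper mapping theorem, Ehresmann's theorem, and basic facts about normalization. Your argument replaces this with the Thom--Mather stratification package: a Whitney stratification adapted to the proper analytic map $f$ with the $a_f$-condition (Hironaka/Verdier), then Thom's isotopy lemma over the top-dimensional strata of $\ms D$. This is a legitimate alternative and, as you correctly observe, yields the stronger statement valid in arbitrary relative dimension; the cost is that the stratification input is substantially deeper and less self-contained than what the paper uses. Two points worth tightening: (i) after restricting the target to the smooth manifold $\ms D\setminus A$, what you actually apply is Thom's \emph{first} isotopy lemma, for which the $a_f$-condition is not needed --- what you do need is that the chosen stratifications make $f$ a \emph{stratified submersion} (each stratum of $f^{-1}(\ms D)$ mapping submersively onto a stratum of $\ms D$), which is part of the content of Hironaka's stratification theorem for proper analytic maps; if instead you keep the full stratification of $\ms D$ and invoke the \emph{second} isotopy lemma, then the $a_f$-condition is genuinely required --- your write-up slightly conflates the two; and (ii) you should note explicitly that $A\subsetneq\ms D$ because the union of top-dimensional strata of $\ms D$ is open and dense, hence nonempty.
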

\begin{proof} 
If $\dim(Z) = 2$, then the result is trivial, so we assume that $\dim(Z) > 2$. 
Put $X=f\inv(\ms D)$. This is a subvariety of $Z$, which is reducible in general. Though $X$ is non-reduced in general, in the sequel, we consider the reduction as its analytic structure.
Let $\nu:\tilde X\to X$ be the normalization of $X$.
This in particular makes all irreducible components of $X$ disjoint.
Since all components of $\tilde X$ are normal, 
the singular locus of $X$ has codimension at least two.
The restriction of the composition $\tilde f:=f\circ\nu$ to each connected component of $\tilde X$ is also proper. By the argument in the proof of Proposition~\ref{p:disc} (which does not need connected fibers) and Ehresmann's fibration theorem, $\tilde{f}$ is differentiably a fiber bundle map away from a proper subvariety $B_1 \subset \ms D$. That is, $\tilde X\setminus\tilde f\inv(B_1)\to \ms D\setminus B_1$ is differentiably a fiber bundle whose fibers are typically disconnected.

From the fundamental property about the normalization given in \cite[p.\,163, Proposition]{GR}, $\nu:\tilde X\to X$ is naturally factorized as $\tilde X\stackrel{\tau}\lras X'\stackrel{\nu'}\lras X$, where $X'$ is the space of all prime germs of $X$, $\nu':X'\to X$ is the natural holomorphic surjection, and $\tau:\tilde X\to X'$ is the normalization of $X'$ which is a homeomorphism.
Here, one of $\tau$ and $\nu'$ can be a biholomorphic mapping. The mapping $\nu'$ is biholomorphic if and only if $X$ is irreducible at every point of $X$, and in particular, $\nu'$ already makes all irreducible components of $X$ disjoint.
Since $\tilde X\simeq X'$ topologically by $\tau$, for the mapping $f':= f\circ\nu':X'\to \ms D$,
the restriction $X'\setminus (f')\inv(B_1)\to \ms D\setminus B_1$
 is also topologically a fiber bundle.

Let $N\subset X$ be the non-normal locus of $X$, which is a subvariety of $X$.
Put $\tilde N:=\nu\inv(N)$ and $N':=(\nu')\inv(N)$. These are subvarieties of $\tilde X$ and $X'$ respectively.
The restriction $\nu|\tilde N:\tilde N\to N$ is a finite mapping, $\tau|\tilde N:\tilde N\to N'$ is a homeomorphism, and $\nu'|N':N'\to N$ is a finite map which is not of degree one unless $\nu'$ is biholomorphic.
Let $\hat N$ be the complex space which is obtained from $N$ by removing all irreducible components of $N$ which are contracted to a proper subvariety of $\ms D$ by $f$, and let $\hat N'$ be the complex space which is obtained from $N'$ by removing all inverse images of the components of $N-\hat N$.
Then the restriction of $f'$ to each connected component of $\hat N'$ is surjective over $\ms D$ and it is generically a finite mapping.
Let $B_2\subset\ms D$ be the union of the discriminant locus of the restriction of $f'$ to each connected component of $\hat N'$.
$B_2$ is also a proper subvariety of $\ms D$. 
Since $\dim(f) = 1$, the restriction $\hat N'\setminus (f')\inv(B_2)\to \ms D\setminus B_2$ of $f'$ is a finite unramified covering of complex manifolds of dimension $\dim Y-1$.

By the construction of the space $X'$, 
$X$ is obtained from $X'$ by identifying points that are over the same point of $N$ under $\nu'$. Namely, $X=X'/\nu'$.
The restriction $X'\setminus (f')\inv(B_1\cup B_2)\to \ms D\setminus (B_1\cup B_2)$ of $f'$ is topologically a fiber bundle map since it is so over $\ms D\setminus B_1$ as above.
Further, from the factorization $f' = f\circ\nu'$, the restriction $\hat N'\setminus (f')\inv(B_2)\to \hat N\setminus f\inv(B_2)$ of $\nu'$ is also an unramified covering.
Therefore, so is a further restriction $\hat N'\setminus (f')\inv(B_1\cup B_2)\to \hat N\setminus f\inv(B_1\cup B_2)$ of $\nu'$.
Since $f':X'\setminus (f')\inv(B_1)\to \ms D\setminus B_1$ was topologically a fiber bundle map, the restriction $X'\setminus (f')\inv(B_1\cup B_2)\to \ms D\setminus (B_1\cup B_2)$ is also topologically a fiber bundle map.
Combining with the last unramified property, we conclude that the restriction $f:
X\setminus f\inv(B_1\cup B_2)\to \ms D\setminus (B_1\cup B_2)$ is also topologically a bundle map.
Taking $A=B_1\cup B_2$, this finishes the proof of the proposition.
\end{proof}

\subsection{Monodromy}
\label{ss:monodromy}
A fundamental object associated with a fibration is the higher direct image sheaf 
$R^jf_* \RR$, which we next discuss in more detail. 
\begin{proposition}
\label{p:stalk}
 For all $y \in Y$, the stalk of $R^jf_* \RR$ over $y$ satisfies
\begin{align}
(R^j f_* \RR)_y \simeq H^j(Z_y;\RR).
\end{align}
\end{proposition}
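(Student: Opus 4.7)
The plan is to identify the stalk $(R^jf_*\RR)_y$ with a direct limit of cohomologies of open neighborhoods of $Z_y$, and then invoke the continuity of sheaf cohomology on compact subsets. First I would recall that by the very construction of the higher direct image sheaf,
\begin{align*}
(R^j f_* \RR)_y \;=\; \varinjlim_{U \ni y} \, (R^jf_*\RR)(U) \;=\; \varinjlim_{U \ni y} H^j(f^{-1}(U); \RR),
\end{align*}
where the direct limit runs over open neighborhoods $U \subset Y$ of $y$, and the last equality uses that $R^jf_*\RR$ is the sheafification of the presheaf $U \mapsto H^j(f^{-1}(U);\RR)$, together with the fact that this presheaf is already a sheaf on the stalk level.

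Next I would exploit the properness of $f$ to re-index this direct limit over open neighborhoods of the fiber $Z_y$ in $Z$. The key point is a tube-lemma style argument: given any open $W \subset Z$ with $W \supset Z_y$, the complement $Z \setminus W$ is closed, and since $f$ is proper, Remmert's proper mapping theorem (already used in the proof of Proposition~\ref{p:disc}) implies that $f(Z \setminus W)$ is closed in $Y$. Setting $U := Y \setminus f(Z \setminus W)$ yields an open neighborhood of $y$ with $f^{-1}(U) \subseteq W$. Conversely, each $f^{-1}(U)$ with $y \in U$ is itself an open neighborhood of $Z_y$. Hence the two directed systems are cofinal and
\begin{align*}
(R^j f_* \RR)_y \;=\; \varinjlim_{W \supset Z_y} H^j(W; \RR).
\end{align*}

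Finally I would apply the continuity property of sheaf cohomology, which states that for a paracompact Hausdorff space $Z$ and a compact subset $Z_y \subset Z$, the natural restriction maps induce a canonical isomorphism
\begin{align*}
\varinjlim_{W \supset Z_y} H^j(W; \RR) \;\simeq\; H^j(Z_y; \RR).
\end{align*}
Complex analytic spaces are paracompact and Hausdorff, and $Z_y$ is compact since $f$ is proper, so this identification applies and completes the proof.

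The main obstacle is justifying this last step cleanly. One can either cite the general sheaf-theoretic continuity result (e.g.\ Bredon, \textit{Sheaf Theory}, Ch.~II, or Godement), or give a hands-on argument using the existence of a cofinal family of neighborhoods of $Z_y$ which deformation retract onto $Z_y$; the latter follows in the analytic category from triangulability of analytic sets (\L ojasiewicz) applied to a local embedding of a neighborhood of $y$. The sheaf-theoretic route is shorter and is what I would adopt here.
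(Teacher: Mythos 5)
Your proposal follows essentially the same route as the paper: identify the stalk as a direct limit over open $U \ni y$ of $H^j(f^{-1}(U))$, use properness to show this is cofinal with the system of all open neighborhoods $W \supset Z_y$, and then invoke continuity (tautness) of sheaf cohomology for the compact set $Z_y$. One small correction: to conclude that $f(Z\setminus W)$ is closed you should not invoke Remmert's proper mapping theorem, which is about images of \emph{analytic} subsets under proper holomorphic maps; $Z \setminus W$ is just a closed set, and the fact you need is the point-set topology result that a proper continuous map to a first-countable (or locally compact) Hausdorff space is a closed map, which is what the paper cites.
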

\begin{proof}
The stalk of $(R^j f_* \RR)_y$ is the directed limit of $H^j(f^{-1}(V);\RR)$ as $V \to y$. Let $U$ be any open set containing $Z_y$. Recall that any proper continuous mapping to a first countable Hausdorff space is a closed mapping \cite[Corollary]{Pa70}. Since $Z \setminus U$ is closed, this implies that $f(Z \setminus U)$ is closed. Then consider the open set $V = Y \setminus f(Z \setminus U)$. Then $f^{-1}(V) \subset U$. Consequently, $(R^j f_* \RR)_y$ is the directed limit of $H^j(U;\RR)$ over \textit{all} open sets $U$ containing $Z_y$. But since $Z_y$ is a compact subset of $Z$, this limit equals $H^j(Z_y;\RR)$ since the sheaf $\RR$ is a constant sheaf; see \cite[Theorem III.6.2]{Iversen}.
\end{proof}
Assuming that $f(\Sing Z) \neq Y$, then the mapping in $\eqref{fb1}$ is a smooth fiber bundle, the sheaf $R^j f'_* \RR$ is locally constant over $Y' \equiv Y \setminus \ms{D}$ for any $j \geq 0$. Locally constant sheaves are equivalent to representations of the fundamental group acting by automorphism of any fixed fiber; see \cite[Section~IV.9]{Iversen}. Therefore, we obtain the monodromy mappings
\begin{align}
\rho^{(k)} : \pi_1( Y') \rightarrow Aut(H^k(F;\RR)).
\end{align}
In the case the base $Y$ is a curve and $y \in Y$ is an isolated discriminant point, we will denote by $\rho^{(k)}_y$ the local monodromy around $y$.  Later, we will be interested in the invariants of the monodromy representation, 
which are denoted by 
\begin{align}
H^k_{\rho}(F;\RR) = \{ v \in H^k(F;\RR) \ | \ \rho^{(k)}(\gamma)(v) = v \ \mbox{for all} \ \gamma \in \pi_1(Y')\}.
\end{align}
We note also that 
\begin{align}
\label{hkrho}
H^k_{\rho}(F;\RR) \simeq H^0(Y', R^kf_* \RR);
\end{align}
 see \cite[Proposition~5.5.14]{DavisKirk}. 
\subsection{Ordinary and non-ordinary points}
\label{ss:op}
Now let us restrict to the case that $Z$ is smooth and $Y$ is normal, so that $\Sing Y$ has codimension at least $2$ in $Y$. The following definition is adapted from \cite[p.\,29, Definition]{Fjt86}.
\begin{definition} We say that $y \in \ms{D}$ is an {\em ordinary point} if 
$y$ is regular point of $\ms D$ with $\dim_y(\ms D) = \dim(Y) -1$, and 
there exists a holomorphic non-singular disc $\Delta \subset Y$ 
through $y$, defined in a neighborhood of $x$, such that $\Delta$ intersects $\ms D$ transversally at $x$, and such that $f\inv(\Delta)$ is non-singular.
Any other point $y \in \ms D$ is called a {\em non-ordinary point} of $\ms{D}$. 
\end{definition}
Note that in particular, any singular point $y \in \Sing \ms D$ is non-ordinary
and any point $y \in \ms{D}$ with $\dim_y(\ms D) < \dim(Y) -1$ is non-ordinary.

\begin{proposition} 
\label{p:non}
The set of non-ordinary points of $\ms D$ is an analytic subset of $Y$ with codimension at least $2$. 
\end{proposition}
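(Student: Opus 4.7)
The plan is to decompose the non-ordinary locus $N \subset \ms D$ as $N = N_1 \cup N_2 \cup N_3$, where (writing $m := \dim Y$)
\begin{equation*}
N_1 := \{y \in \ms D \set \dim_y \ms D < m-1\}, \qquad N_2 := \Sing \ms D \cap \{y \in \ms D \set \dim_y \ms D = m-1\},
\end{equation*}
and $N_3$ is the set of smooth codimension-$1$ points of $\ms D$ at which no transverse disc gives a non-singular preimage. I would first show $N_1$ and $N_2$ are analytic of codimension $\geq 2$ in $Y$ using standard facts about analytic sets (upper semicontinuity of local dimension, and the codimension-$2$ bound on the singular locus of a pure codim-$1$ analytic set), thereby reducing to the analysis of $N_3$.

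For $N_3$, I would work locally at $y \in \ms D^\circ := \ms D \setminus (N_1 \cup N_2)$ in coordinates with $\ms D = \{y_1 = 0\}$, parametrizing transverse discs through $y$ by directions $v = (1, v_2, \ldots, v_m)$. Writing $f^{-1}(\Delta_v) = \{f_i - v_i f_1 = c_i \set i \geq 2\}$ locally, a direct Jacobian computation shows $f^{-1}(\Delta_v)$ is smooth at $z \in f^{-1}(y)$ iff $V_z + \langle v\rangle = T_y Y$, where $V_z := df_z(T_z Z)$. Hence $y \in N_3$ iff either (a) some $z \in f^{-1}(y)$ lies in $B^{(m-2)} := \{z \in Z \set \rank(df)_z \leq m-2\}$ (a $v$-independent obstruction), or (b) the hyperplanes $V_z$ over critical $z$ of rank exactly $m-1$, together with $T_y \ms D$, cover all of $T_y Y$. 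The determinantal locus $B^{(m-2)}$ is analytic in $Z$ by the argument of Proposition~\ref{p:disc}, and by Remmert's theorem $f(B^{(m-2)})$ is analytic in $Y$; since $d(f|_{B^{(m-2)}})$ has rank $\leq m-2$ at smooth points of $B^{(m-2)}$, its image has dimension $\leq m-2$, giving codim $\geq 2$ in $Y$.

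The main obstacle is condition (b), which I would analyze via the relative conormal variety $\Sigma \subset \PP(T^*Y)$, defined as the closure of the image of $B^{(m-1)} \setminus B^{(m-2)}$ under the Gauss-type map $z \mapsto (f(z), [\mathrm{Ann}(V_z)])$; this is analytic by Remmert's theorem. Condition (b) is equivalent to $\dim \Sigma_y \geq 1$ (because for $\Sigma_y \subset \PP(T_y^*Y)$ positive-dimensional, every hyperplane meets $\Sigma_y$ by the projective dimension theorem, which translates to every $v \in T_y Y$ being bad), and the set $\{y \in \ms D^\circ \set \dim \Sigma_y \geq 1\}$ is analytic by upper semicontinuity of fiber dimension for the proper projection $\Sigma \to Y$. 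The key observation is that at a generic smooth point $z \in B$ over $\ms D^\circ$, the map $df|_B : T_z B \to T_{f(z)} \ms D$ is surjective by generic smoothness of the dominant map $f|_B : B \to \ms D^\circ$, forcing $V_z \supset T_{f(z)} \ms D$ and hence $V_z = T_{f(z)} \ms D$ by dimension count. Therefore $\Sigma_y$ reduces to the single point $[dF|_y]$ for generic $y \in \ms D^\circ$ (with $F$ a local defining function of $\ms D$), so $\dim \Sigma_y = 0$ generically, and $\{\dim \Sigma_y \geq 1\}$ is a proper analytic subvariety of $\ms D^\circ$, hence of codim $\geq 2$ in $Y$. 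Combining all pieces, $N$ is analytic of codimension $\geq 2$.
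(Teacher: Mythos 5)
Your approach is a genuinely different, more explicit route than the paper's, which disposes of the claim in two lines by invoking Bertini's theorem on a moving family of transverse slices. Unpacking this into a determinantal/conormal-variety argument is reasonable, and most of your reduction is sound: the decomposition into $N_1,N_2,N_3$, the Jacobian criterion that $f^{-1}(\Delta_v)$ is smooth at $z$ iff $V_z+\langle v\rangle=T_yY$, the codimension bound on $f(B^{(m-2)})$ via the rank of $df$ restricted to $B^{(m-2)}$, and the equivalence of condition (b) with $\dim\Sigma_y\ge 1$ via the projective dimension theorem (which is valid once $y\notin f(B^{(m-2)})$, so that $\Sigma_y$ consists only of honest $[\mathrm{Ann}(V_z)]$ over $y$, by properness of $f$).

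The gap is in your final \emph{Therefore}. You establish that at a \emph{generic} smooth point $z\in B$, the restriction $f|_B$ is submersive, hence $V_z=T_{f(z)}\ms D$. But the conclusion you need, that $\Sigma_y$ is a single point for generic $y$, requires this equality for \emph{every} $z\in B\cap f^{-1}(y)$ over generic $y$, and ``generic $z\in B$'' does not directly yield ``generic $y$, all $z$ over $y$.'' A priori the non-submersive locus $B_{\mathrm{bad}}\subset B$ of $f|_B$, while a proper analytic subset of $B$, could have components of dimension $\ge m-1$ whose $f$-image is dense in $\ms D^\circ$, in which case $\Sigma_y$ would pick up an extra point over every $y$ in an open set. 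To close this, one must show $f(B_{\mathrm{bad}})$ is a \emph{proper} subvariety of $\ms D^\circ$; this follows from Sard's theorem applied to $f|_B$ (component by component, and recursing through $\Sing B$ to handle the singular strata) together with Remmert's proper mapping theorem to upgrade ``measure zero'' to ``proper analytic subset.'' Only then does $\dim\Sigma_y=0$ hold off a set of codimension $\ge 2$. A secondary point: you invoke Remmert to conclude $\Sigma$ is analytic, but the Gauss-type map is defined only on the locally closed set $B^{(m-1)}\setminus B^{(m-2)}$, which is not proper over $Y$; you should pass to the closure of its graph in $B\times\PP(T^*Y)$ (proper over $Y$ since $f$ is) and take the image of that. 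Finally, you should restrict throughout to $Y\setminus\Sing Y$, where $\PP(T^*Y)$ makes sense; this is harmless since $\Sing Y$ has codimension $\ge 2$ by normality.
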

\begin{proof}
If $y$ is an ordinary point, then we call the curve $\Delta$ through $y$ in the definition 
an {\em ordinary slice} of $\ms{D}$ through $x$.
By Bertini's theorem, moving an ordinary slice along $\ms{D}$
in such a way that the slices are disjoint, the claim follows. 
\end{proof}

The main point is that singular fibers over ordinary points are degenerations of the general fiber over a $1$-dimensional disc. These are often easier to analyze, and will be discussed in the next section. Fibers over non-ordinary points are much more difficult to understand in general. 

\section{Fibrations over a curve}
\label{s:curve}
In this section, we consider the case where the base $Y$ is a curve. In this case, the discriminant locus consists of isolated points. We begin with a review of some basic results on deformation retractions in this setting. 

\subsection{Deformation retractions}

Given a subspace $A$ of a topological space $M$, we define a {\em deformation retraction} to be a continuous mapping $\Phi:[0,1]\times M\rightarrow M$ such that $\Phi(1,\cdot) = \id_M$ and $\Phi(0,\cdot)$ is a mapping from $M$ to $A$ that satisfies $\Phi(t,\cdot)|_A = \id_A$ for any $t\in[0,1]$.  We first recall the following result which follows easily from \cite{Cl77} with some minor remarks. 
\begin{proposition} 
\label{p:def}
Let $Z$ be a smooth complex $n$-dimensional manifold, and $\pi: Z \rightarrow \Delta$ a proper surjective holomorphic mapping with $\pi$ of maximal rank away from $F_0 = \pi^{-1}(0)$. Then there exists a deformation retraction $\Phi:[0,1]\times Z\rightarrow Z$ of $Z$ onto the fiber $F_0$ that is a lift of the standard radial deformation retraction from $\DDD$ onto $\{0\}$. Furthermore, if $F_0 = \sum_j m_j F_{0,j}$ is the decomposition of the fiber $F_0$ into irreducible components with $m_j > 0$, then 
the isomorphism $H_{2n-2}(Z)\simeq H_{2n-2}(F_0)$ induced from the deformation retraction $\Phi$ maps the fiber class of $\pi$ to the class $\sum_j m_j[F_{0,j}]$, where $[F_{0,j}]$ denotes the fundamental class of the component $F_{0,j}$.
\end{proposition}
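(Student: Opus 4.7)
For the existence of $\Phi$, I would invoke the classical construction of Clemens \cite{Cl77}. On $Z\setminus F_0$ the map $\pi$ is a proper submersion over $\Delta\setminus\{0\}$, so fixing a Hermitian metric on $Z$ yields a smooth horizontal lift $V$ of the radial vector field $-r\partial_r$ on $\Delta$, i.e.\ a vector field on $Z\setminus F_0$ with $d\pi(V)=-r\partial_r$, whose flow contracts fibers along the radial flow. The non-trivial content of Clemens's theorem is that $V$ can be chosen so that its flow extends continuously across $F_0$ (where $V$ generically blows up), producing a continuous retraction of $Z$ onto $F_0$ as the base parameter tends to $0$. After reparameterization this yields $\Phi:[0,1]\times Z\to Z$ satisfying $\pi(\Phi(t,z))=t\cdot\pi(z)$, which is precisely the lift of the radial retraction on $\Delta$.

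For the identification of the fiber class, let $t\in\Delta$ be small and nonzero, so that $F_t:=\pi\inv(t)$ is a compact smooth complex $(n-1)$-manifold whose fundamental class $[F_t]\in H_{2n-2}(Z;\ZZ)$ is the fiber class of $\pi$. Since $\Phi$ is a deformation retraction, the inclusion $F_0\hookrightarrow Z$ induces an isomorphism $H_{2n-2}(F_0)\simeq H_{2n-2}(Z)$ with inverse $\Phi(0,\cdot)_*$, so the task reduces to computing the class $\Phi(0,\cdot)_*[F_t]$ in $H_{2n-2}(F_0)$.

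I would carry this out by a local degree calculation. The group $H_{2n-2}(F_0;\ZZ)$ is freely generated by the fundamental classes $[F_{0,j}]$, and the coefficient of $[F_{0,j}]$ in any top-dimensional class can be read off as the local topological degree of a representing cycle near a generic point of $F_{0,j}$. So it suffices to show that $\Phi(0,\cdot)|_{F_t}$ has degree $m_j$ onto $F_{0,j}$ near such a point. Given $p$ lying on $F_{0,j}$ only and at a smooth point of the reduced support of $F_0$, I can choose local holomorphic coordinates $(w_1,\ldots,w_n)$ with $F_{0,j}=\{w_1=0\}$; since $\pi$ vanishes to order exactly $m_j$ along $F_{0,j}$ and nowhere else nearby, I can write $\pi=w_1^{m_j}u(w)$ with $u$ holomorphic and nonvanishing, and absorbing $u^{1/m_j}$ into $w_1$ puts $\pi$ into the normal form $\pi(w)=w_1^{m_j}$. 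In these coordinates $F_t$ consists of $m_j$ parallel sheets $\{w_1=\zeta^k t^{1/m_j}\}$ converging transversally to $\{w_1=0\}$, and the constraint $\pi\circ\Phi(t,\cdot)=t\cdot\pi$ forces $\Phi(0,\cdot)$ to collapse each sheet onto $F_{0,j}$ as a local homeomorphism of degree $+1$ (being homotopic to the identity on the transverse factor). Summing over $j$ and over sheets gives $\Phi(0,\cdot)_*[F_t]=\sum_j m_j[F_{0,j}]$.

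The principal obstacle is the continuous extension of the flow across $F_0$ in the first step: the horizontal lift $V$ generically blows up on approach to $F_0$, and controlling this singularity to obtain a continuous (rather than merely smooth on the complement) retraction is exactly the non-trivial content of Clemens's construction. Once $\Phi$ is in hand, the class computation is essentially local, with the only care needed being the standard fact that top-degree homology of the analytic space $F_0$ is detected on the smooth stratum of its reduced support (so the non-generic locus---singular points of the support and pairwise intersections of components---is homologically negligible in degree $2n-2$).
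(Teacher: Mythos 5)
Your plan for the second part (local degree calculation at a generic point of each $F_{0,j}$ using the normal form $\pi = w_1^{m_j}$) is a reasonable alternative to the paper's approach, which instead cites Clemens's explicit formula \cite[(6.8)]{Cl77} for the multiplicities and transports it through the blowup map. Your version is more self-contained, though the step where you assert that the constraint $\pi\circ\Phi(t,\cdot)=t\cdot\pi$ ``forces'' each sheet to collapse with local degree $+1$ needs an actual argument: that constraint alone is also satisfied by a constant map to a point of $F_{0,j}$, which has degree $0$. You would need to use the full homotopy $\Phi(s,\cdot)$ together with properness/continuity to pin down the degree, so as written this is a heuristic, not a proof.

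The more serious gap is in the first part. You invoke Clemens's retraction theorem directly on $Z$, but Clemens's construction in \cite[Theorem~6.9]{Cl77} is carried out under the hypothesis that the central fiber has normal crossings support (the local model is $\pi = z_1^{a_1}\cdots z_k^{a_k}$); for an arbitrary singular fiber $F_0$ there is no such local model, and Clemens's vector-field patching does not apply. The paper's proof handles this by first performing a sequence of blowups $\mu:\tilde Z\to Z$ to reduce $\tilde F_0=(\pi\circ\mu)^{-1}(0)$ to the normal crossings case, applying Clemens there to get $\tilde\Phi$, and then \emph{descending} $\tilde\Phi$ to a retraction $\Phi$ on $Z$ via the explicit formula $\Phi(t,z)=\mu\circ\tilde\Phi(t,\mu^{-1}(z))$ for $\pi(z)\neq 0$ and $\Phi(t,z)=z$ on $F_0$; the nontrivial point there is the \emph{continuity} of $\Phi$ at $F_0$, which the paper proves using the properness of $\mu$ (proper maps to first-countable Hausdorff spaces are closed). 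Your proposal omits both the reduction to normal crossings and the descent/continuity argument, and without them the cited theorem of Clemens does not yield the claimed retraction on $Z$ itself.
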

\begin{proof} 
By applying blowups to eliminate non-transversal intersections in $F_0$, we obtain a smooth manifold $\tilde Z$ equipped with a birational holomorphic mapping $\mu:\tilde Z\rightarrow Z$, such that the composition $\tilde\pi:=\pi\circ\mu:\tilde Z\rightarrow \DDD$ satisfies the following conditions:
\begin{itemize}
\item any irreducible component of
the fiber $\tilde F_0:=\tilde \pi\inv(0)$ is non-singular,
\item any intersection of two distinct components of $\tilde F_0$ is transverse,
\item The intersection of strictly more than $n$ distinct components is empty. 
\end{itemize}
By using \cite[(5.1) and Theorem 6.9]{Cl77},
there exists a deformation retraction $\tilde\Phi:[0,1]\times\tilde Z\rightarrow \tilde Z$ of $\tilde Z$ onto the fiber $\tilde F_0$, which is a lift of the standard retraction from $\DDD$ onto a point $\{0\}$ defined by $(t,y)\mapsto ty$, where $t\in[0,1]$ and $y\in\DDD$; in particular, $\tilde\Phi$ preserves the fibration $\tilde\pi$ in the sense that for any $t\in[0,1]\times\RR$, $\tilde\Phi(t,\cdot)$ maps a fiber of $\tilde\pi$ to a fiber of $\tilde\pi$.
In fact, in \cite{Cl77}, an action of the semigroup $[0,1]\times \RR$ on $\tilde N$ that preserves the fibration $\tilde\pi$ is constructed, and the present retraction is obtained by just restricting it to $[0,1]\times\{0\}\subset [0,1]\times\RR$.

We can see that $\tilde\Phi$ descends to a retraction $\Phi$ of $Z$ onto $F_0$ as follows.
Define a deformation retraction $\Phi:[0,1]\times Z \rightarrow Z$ by
\begin{align}\label{Phi}
\Phi(t,z) =
\begin{cases}
\mu\circ\tilde\Phi(t, \mu^{-1}(z)) &  \pi(z) \neq 0\\
z & z \in \pi^{-1}(0). 
\end{cases}
\end{align}
All the properties of a deformation retraction are obvious, except 
for the continuity $\Phi$. To show this, let $A\subset Z$ be any closed subset. Then  
\begin{align}\label{Clem}
\Phi\inv(A) = (\id\times\mu)\big((\mu\circ\tilde\Phi)\inv(A)\big).
\end{align}
Since $\mu$ and $\tilde\Phi$ are continuous, $(\mu\circ\tilde\Phi)\inv(A)$ is a closed subset of $[0,1]\times \tilde Z$. Recall that any proper continuous mapping to a first countable Hausdorff space is a closed mapping; see~\cite[Corollary]{Pa70}.
Since $\mu$ is proper and continuous, the mapping $\id\times \mu$ is also proper and continuous,
so  \eqref{Clem} implies that $\Phi\inv(A)$ is closed in $[0,1]\times Z$. Therefore, $\Phi$ is continuous.

For the second property, let $\tilde{F}_0 = \sum_j \tilde{m}_j \tilde{F}_{0,j}$ be the decomposition of the fiber $\tilde{F}_0$ into irreducible components with $\tilde{m}_j > 0$. Then
the isomorphism $H_{2n-2}(\tilde{Z})\simeq H_{2n-2}(\tilde{F}_0)$ induced from the deformation retraction $\tilde{\Phi}$ maps the fiber class of $\tilde{\pi}$ to the class $\sum_j \tilde{m}_j[\tilde{F}_{0,j}]$, where $[\tilde{F}_{0,j}]$ denotes the fundamental class of the component $\tilde{F}_{0,j}$.
This is a consequence of the action of the semigroup $[0,1]\times \RR$ on $\tilde Z$ which is concretely given in the proof of \cite[Theorem~6.9]{Cl77} using local coordinates around the central fiber of $\pi$, and from \cite[(6.8) in p.~244]{Cl77} where the multiplicities of the components of the central fiber are included. The second property then also holds for $\Phi$, since $\mu$ only contracts some irreducible components of $\tilde{F}_0$.
\end{proof}

\subsection{Proof of Theorem~\ref{t:t1}}
In the case the base is a smooth curve, we give the following estimate on the first betti number of the singular fibers, which implies the first part of Theorem~\ref{t:t1}. 
\begin{theorem}
\label{t:t1b1} Let $f : Z \rightarrow C$ be a fibration, where $Z$ is a compact complex $n$-manifold and $C$ is a smooth curve. Then $b^1(Z_y) \leq b^1(F)$
for \textit{all} $y \in C$, where $F$ is the general fiber. Equality holds for some $y \in C$ if and only if the local monodromy $\rho^{(1)}_y$ around $y$ is trivial. 
\end{theorem}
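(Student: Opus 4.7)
We localize around $y$. Pick a small disc $B\subset C$ around $y$ containing no other critical value of $f$, set $N_B=f\inv(B)$ and $N_B^*=f\inv(B\setminus\{y\})$, and let $F\subset N_B^*$ be a general smooth fiber; write $T=\rho^{(1)}_y$. Proposition~\ref{p:def} provides a deformation retraction $r:N_B\to F_0$ inducing $H^1(N_B,\RR)\simeq H^1(F_0,\RR)$, and its restriction $\pi_F:=r|_F:F\to F_0$ has connected fibers: in Clemens' construction on the semistable resolution $\tilde Z$, the fibers of the retraction are points over smooth strata and iterated tori over deeper intersection strata of $\tilde F_0$, and this property descends through the birational map $\mu:\tilde Z\to Z$.

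Since $R^0(\pi_F)_*\RR_F=\RR_{F_0}$, the five-term sequence of the Leray spectral sequence for $\pi_F$ yields an injection $\pi_F^*:H^1(F_0,\RR)\hookrightarrow H^1(F,\RR)$. This map factors as $H^1(F_0)\simeq H^1(N_B)\to H^1(N_B^*)\to H^1(F)$, and by the Leray spectral sequence for the smooth $S^1$-fibration $f:N_B^*\to B^*$ the final restriction has image in $H^1(F,\RR)^T$. Combining gives
\[
b^1(F_0)\;\le\;\dim H^1(F,\RR)^T\;\le\;b^1(F),
\]
establishing the inequality, and equality forces $\dim H^1(F,\RR)^T=b^1(F)$, hence $T=\id$.

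For the converse direction, suppose $T=\id$. The Wang sequence for $f:N_B^*\to B^*$ has vanishing connecting map $T_*-\id$ on $H_1(F,\RR)$, giving an injection $H_1(F,\RR)\hookrightarrow H_1(N_B^*,\RR)$. Composing with the restriction $H_1(N_B^*)\to H_1(N_B)=H_1(F_0)$, whose kernel is the one-dimensional ``base loop'' subspace pulled back from $H_1(B^*,\RR)$ via a local section and transverse to the fiber direction, we get $H_1(F,\RR)\hookrightarrow H_1(F_0,\RR)$ injective, so $b^1(F)\le b^1(F_0)$. I expect the main technical point will be rigorously establishing that the kernel of $H_1(N_B^*,\RR)\to H_1(N_B,\RR)$ is exactly one-dimensional; the key ingredients are the vanishing of local cohomology $H^1_{F_0}(N_B,\RR)=0$ for the complex codimension-one analytic subset $F_0\subset N_B$ (which makes $H^1(N_B)\hookrightarrow H^1(N_B^*)$ injective) and a dimension count via the Leray spectral sequences on $(N_B,B)$ and $(N_B^*,B^*)$.
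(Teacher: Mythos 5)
Your plan shares the paper's starting point (localize to a disc and use Proposition~\ref{p:def}), but it diverges at the key step and has genuine gaps.

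\textbf{The ``connected fibers'' claim is false.} You assert that the retraction $r|_F: F\to F_0$ has connected fibers because, on the semistable resolution, the retraction has point fibers over the smooth strata. But Proposition~\ref{p:def} only produces a \emph{normal crossings} model $\tilde Z$, not a \emph{semistable} (multiplicity-one) one; the coefficients $\tilde m_j$ can exceed $1$, and semistable reduction would require a base change $t\mapsto t^N$, which changes $F_0$ and the monodromy and is therefore not available here. Over a smooth point of a multiplicity-$m$ component, the Clemens retraction restricted to a nearby fiber is generically $m$-to-$1$, so $R^0(\pi_F)_*\RR_F$ has rank-$m$ stalks there, not $\RR_{F_0}$. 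Concretely: for a multiple fiber $mE$ of an elliptic fibration (a logarithmic transform), $\pi_F$ is the connected $m$-sheeted covering $E\to E/(\ZZ/m)$, whose fibers are $m$ points. Consequently the asserted equality $R^0(\pi_F)_*\RR_F=\RR_{F_0}$, and with it your five-term--sequence argument for $\pi_F^*$ injective, does not hold as stated.

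\textbf{The injectivity is true, but needs the ingredient you skipped.} The map $H^1(F_0)\simeq H^1(N_B)\to H^1(F)$ \emph{is} injective, and indeed your inequality chain $b^1(F_0)\le\dim H^1(F,\RR)^T\le b^1(F)$ is correct, but the injectivity is not free: its kernel is trivial precisely because the image of $[F]\in H_{2n-2}(\partial N_B)$ in $H_{2n-2}(N_B)\simeq H_{2n-2}(F_0)$ is the nonzero class $\sum_j m_j[F_{0,j}]$ --- the \emph{second} conclusion of Proposition~\ref{p:def}, which you do not invoke. (Tracing through Lefschetz duality, ``$\pi_F^*$ not injective'' would force $f^*(\text{gen }H^1(S^1))$ to lie in the image of $H^1(N_B)\to H^1(\partial N_B)$, contradicting the nontriviality of that class.) This is exactly the nontriviality of $\iota$ in the paper's argument via $H^1(N_B)\simeq H_{2n-1}(N_B,\partial N_B)$ and the long exact sequence of the pair $(N_B,\partial N_B)$; your route just repackages it, but the multiplicity statement must be used somewhere.

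\textbf{The converse is circular.} You need the kernel of $H_1(N_B^*)\to H_1(N_B)$ to be exactly one-dimensional. But a dimension count from $b^1(N_B^*)=1+\dim H^1(F)^T$ (Leray/Wang for $N_B^*\to B^*$) and $b^1(N_B)=b^1(F_0)$, together with the injectivity $H^1(N_B)\hookrightarrow H^1(N_B^*)$ that you correctly establish, gives
\begin{equation*}
\dim\ker\bigl(H_1(N_B^*)\to H_1(N_B)\bigr)\;=\;1+\dim H^1(F)^T-b^1(F_0).
\end{equation*}
So ``kernel is one-dimensional'' is literally the statement $b^1(F_0)=\dim H^1(F)^T$, which under $T=\id$ is the equality $b^1(F_0)=b^1(F)$ you set out to prove. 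You do flag this as the main technical point, but it is not a technical point to be filled in --- it is the entire content of the converse, and local cohomology vanishing plus a dimension count will not produce it.

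In summary: the forward inequality can be rescued by replacing the ``connected fibers'' claim with the multiplicity/nontriviality statement of Proposition~\ref{p:def}, at which point the argument essentially collapses into the paper's Lefschetz-duality proof; the converse direction as written is circular.
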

\begin{proof}
The result is local, so we just need to consider a fibration $f : U \rightarrow \Delta$ without any critical points in $\Delta^*$.  By shrinking the disc slightly, we may assume that $\partial U = M$ is a smooth manifold of real dimension $2n-1$ which fibers over the circle, that is, the restriction  $f': M \rightarrow S^1$ is a smooth fiber bundle. By Proposition~\ref{p:def}, we can also assume that $f^{-1}(\Delta)$ deformation retracts onto $f^{-1}(0) \equiv Z_0$. Denote the general fiber by $F$. In the following, unless otherwise indicated, cohomology and homology groups are with real coefficients. By the deformation retraction property and Lefschetz duality on compact manifolds with boundary (see \cite[Corollary~VI.9.3]{Bredon}), we have 
\begin{align}
H^1(Z_0) \simeq H^1(U) \simeq H_{2n-1}(U, \partial U). 
\end{align}
Since $Z_0$ is of real dimension $2n-2$, $H_{2n-1}(U) = 0$, so 
the long exact sequence in relative homology is 
\begin{equation}
\begin{tikzcd} 
 0 \arrow[r] & H_{2n-1}(U, \partial U) \arrow[r] & H_{2n-2}(M) \arrow[r,"\iota"] & H_{2n-2}(U)
\arrow[r] & H_{2n-2}(U, \partial U).
\end{tikzcd}
\end{equation}
From Proposition~\ref{p:def} above, under the deformation retraction the smooth fiber class $[F] \in H_{2n-2}(M) $ maps to a nontrivial combination $\sum_j m_j [F_j]$, where $F_j$ are the irreducible components of $Z_0$. This means that the mapping $\iota$ is nontrivial.
Consequently, we obtain
\begin{align}
b^1(Z_0) \leq b^{2n-2} (M) -1 = b^1(M) -1
\end{align}
where the last equality holds by Poincar\'e duality, since $M$ is a manifold. 
The Leray sequence for $f' : M \rightarrow S^1$  is 
\begin{equation}
\begin{tikzcd} 
0 \arrow[r] & H^1(S^1) \arrow[r] & H^1(M) \arrow[r] & H^0(S^1, R^1 f'_* \RR) \arrow[r] & 0. 
\end{tikzcd}
\end{equation}
As mentioned in \eqref{hkrho} above, since $R^1 f'_* \RR$ is a locally constant sheaf on $S^1$, we have that 
\begin{align}
H^0(S^1, R^1 f_* \RR)  \simeq H^1_{\rho}(F),
\end{align}
where $H^1_{\rho}(F)$ are the invariants of the monodromy representation on $H^1(F)$.
Consequently, 
\begin{align}
b^1(M) = 1 + \dim H^1_{\rho}(F) \leq 1 + b^1(F).
\end{align}
Letting $b^1_{\rho}(F) \equiv \dim H^1_{\rho}(F)$, combining these estimates, 
we obtain 
\begin{align}
b^1(Z_0) \leq b^1_{\rho}(F) \leq b^1(F),
\end{align}
with equality if and only if $H^1_{\rho}(F) = H^1(F)$ which is equivalent to the local monodromy acting trivially. 
\end{proof}

We next need the following lemma, which will be used below in the proofs of Theorem~\ref{t:t1n2} and Theorem~\ref{t:t2n2}. 
\begin{lemma}
\label{l:Euler}
 Let $D \subset X$ be a closed subvariety of the compact complex space $X$. If $Sing(X)$ is discrete, then 
\begin{align}
\label{chiX}
\chi(X) = \chi(X \setminus D) + \chi(D).
\end{align}
\end{lemma}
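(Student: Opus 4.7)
The strategy is to introduce the Euler characteristic with compact supports $\chi_c$ as a bridge to the ordinary one. For any locally compact Hausdorff space $X$ with closed subspace $D$, the long exact sequence of compactly supported cohomology associated to the open--closed decomposition,
\begin{equation*}
\cdots \lras H^i_c(X \setminus D) \lras H^i_c(X) \lras H^i_c(D) \lras H^{i+1}_c(X \setminus D) \lras \cdots ,
\end{equation*}
immediately yields the scissor relation $\chi_c(X) = \chi_c(D) + \chi_c(X \setminus D)$ whenever all terms are finite. Since $X$ and $D$ are compact, $\chi_c$ coincides with $\chi$ on both, so the identity \eqref{chiX} reduces to proving the equality $\chi(X \setminus D) = \chi_c(X \setminus D)$ on the non-compact complex analytic space $X \setminus D$.

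To prove this remaining equality, the plan is to exploit the complex analytic (equivalently, even-real-dimensional) structure. By the Lojasiewicz triangulation theorem for analytic pairs, $(X,D)$ admits a finite triangulation in which $D$ is a subcomplex; the hypothesis that $\Sing(X)$ is discrete ensures the singular points appear only as $0$-dimensional cells and cause no pathology in the triangulation. Equivalently, one can stratify $X \setminus D$ by finitely many smooth complex analytic strata $S_\alpha$ of complex dimension $k_\alpha$. On each $S_\alpha$, Poincar\'e duality gives $H^i_c(S_\alpha) \simeq H^{2k_\alpha - i}(S_\alpha)$, which forces $\chi(S_\alpha) = \chi_c(S_\alpha)$ because the real dimension $2k_\alpha$ is even. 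Inducting on the strata via the $\chi_c$ scissor relation, together with a Mayer--Vietoris computation for $\chi$ using tubular neighborhoods whose boundary sphere bundles have vanishing Euler characteristic, one concludes $\chi(X \setminus D) = \chi_c(X \setminus D)$.

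The main obstacle is precisely this reconciliation of $\chi$ and $\chi_c$ on the non-compact complement, which depends crucially on the complex analytic structure; the identity fails for general locally compact spaces (for instance, the open interval has $\chi = 1 \neq -1 = \chi_c$). Once the comparison is secured, the scissor relation for $\chi_c$ delivers the lemma without further work.
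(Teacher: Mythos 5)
Your approach is correct and takes a genuinely different route from the paper. The paper proves the result case by case, combining Lefschetz duality for the pair $(X,D)$ in the smooth case, Hironaka resolution of singularities to reduce to the smooth case via the identification $\tilde X/\tilde D \simeq X/D$, and Milnor's conical structure theorem with a Mayer--Vietoris argument for singular points lying outside $D$. You instead run everything through $\chi_c$: the scissor relation $\chi_c(X) = \chi_c(D) + \chi_c(X\setminus D)$ is automatic from the long exact sequence of compactly supported cohomology, $\chi_c = \chi$ on the two compact pieces is trivial, and the content is concentrated in the single equality $\chi(X\setminus D) = \chi_c(X\setminus D)$. That equality follows from Poincar\'e duality on even-real-dimensional manifolds plus a Mayer--Vietoris argument near the finitely many singular points, whose boundary links are odd-dimensional closed manifolds with vanishing Euler characteristic. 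This is essentially the argument in \cite[Section~4.5]{Fultontoric} that the paper's own remark references for the more general algebraic version, and it has the virtue of avoiding resolution of singularities entirely, which is a real simplification. Two small caveats: the claim that discreteness of $\Sing(X)$ ``causes no pathology in the triangulation'' misattributes where the hypothesis is used --- the Lojasiewicz triangulation applies to any analytic pair regardless of singularities, and the discreteness is instead what lets you sidestep general Thom--Mather tube systems and invoke only the conical structure theorem at the isolated singular points; and the phrase ``inducting on the strata via the $\chi_c$ scissor relation, together with a Mayer--Vietoris computation for $\chi$'' is stated more generally than needed and would require some care if the singular locus were positive-dimensional, so it is worth spelling out that the stratification here is just the smooth locus together with finitely many isolated points. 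With those points filled in, the argument is complete and self-contained.
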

\begin{proof}
Note that by the Lojasiewicz triangulation theorem, there is a neighborhood $U$ of $D$ in $X$ which strongly deformation retracts onto $D$; see \cite{Loja}. This will be used several times below. 

Case (1): assume that $X$ is smooth. In this case, Lefschetz duality (see \cite[Theorem~6.2.19]{Spanier}) implies that 
\begin{align}
H_j(X \setminus D) \simeq \check{H}^{2\dim_\CC(X)-j} (X,D) \simeq  H^{2\dim_\CC(X)-j} (X,D)
\end{align}
for  $j = 0, \dots, 2 \dim_{\CC}(X)$ where $\check{H}^*(X,D)$ is the direct limit of $H^*(X,U)$ as $U$ ranges over all open subsets of $D$ in $X$, which is equal to the ordinary cohomology $H^*(X,D)$ (as follows from the Lojasiewicz triangulation theorem).  The long exact sequence in cohomology for the pair $(X,D)$ then implies \eqref{chiX}.  

Now we assume that $X$ is not smooth. Let $\pi : \tilde{X} \rightarrow X$ be a resolution; see~\cite{Hiro71}. 
Let $\tilde{D} = \pi^{-1}(D)$ be the total transform of $D$. 

Case (2): assume that $\Sing(X) \subset D$. Then $\tilde{X} \setminus \tilde{D}$ is biholomorphic to $X \setminus D$. Since $\tilde{X}$ is smooth, by Case (1) we have 
\begin{align}
H_j(X \setminus D) \simeq H_j(\tilde{X} \setminus \tilde{D})
\simeq H^{2\dim_\CC(X)-j} (\tilde{X},\tilde{D}).
\end{align}
Next, since $\tilde{D}$ is a neighborhood retract in $\tilde{X}$,
the pair $(\tilde{X}, \tilde{D})$ is a good pair, so from \cite[Proposition~2.22]{Hatcher}, we have the isomorphism
\begin{align}
H^q(\tilde{X}, \tilde{D}) \simeq \tilde{H}^q(\tilde{X} / \tilde{D})
\end{align}
for all $q \geq 0$, where $\tilde{H}^q$ denotes the reduced cohomology, 
and the quotient means $\tilde{D}$ is identified to a point. 
But since $\Sing(X) \subset D$, we also have that $\tilde{X} / \tilde{D}$ is homeomorphic to $X/D$,
so then using Hatcher's Theorem again with the good pair $(X,D)$, we have 
\begin{align}
H_j(X \setminus D) \simeq \tilde{H}^{2\dim_\CC(X)-j} (\tilde{X} / \tilde{D})
\simeq \tilde{H}^{2\dim_\CC(X)-j} (X/D) \simeq H^{2\dim_\CC(X)-j}(X,D),
\end{align}
and then \eqref{chiX} follows as before. 

Finally, we have Case (3): $\Sing(X)$ is not completely contained in $D$. Let $\{p_1, \dots, p_k\}$ denote those singular points not in $D$. We let $\pi' : X' \rightarrow X$ be the resolution at all the singular points $p_j$, and let $E_1, \dots, E_k$ denote the corresponding exceptional divisors. 
Since $D$ does not hit those points $p_j$, $D' = \pi'^{-1}(D)$ is homeomorphic to $D$.  On $X'$, we are in Case~(2) above ($\Sing X' \subset D'$), so we have 
\begin{align}
\chi( X' \setminus D') = \chi(X') - \chi(D') = \chi(X') - \chi(D). 
\end{align}
Since the singularities $p_j$ are isolated, by the conical structure theorem from Milnor \cite[Theorem~2.10]{Milnor_sing} (see also \cite{BV} for the analytic case), there is a neighborhood $U_j$ of $p_j$ which is homeomorphic to a cone on the link $L_j$, which is a smooth manifold. Then a straightforward Mayer-Vietoris argument shows that we have 
\begin{align}
\begin{split}
\chi(X' \setminus D') &= \chi(X \setminus D) + \sum_{j=1}^k ( \chi(E_j) -1) \\
&= \chi(X') - \chi(D) = \chi(X) + \sum_{j=1}^k (\chi(E_j) -1 ) - \chi(D),
\end{split}
\end{align}
which again implies \eqref{chiX}.
\end{proof}
\begin{remark} The above result holds in the algebraic setting with no restriction on the singularities of $X$; see \cite[Section~4.5]{Fultontoric}. With a little more work, it can also be proved in the analytic setting with no restriction on the singularities of $X$. But for the applications in this paper, we only require this additivity if $X$ has isolated singularities. 
\end{remark}
We can now prove the second part of Theorem~\ref{t:t1}. 
\begin{theorem} 
\label{t:t1n2} 
Let $f : Z \rightarrow C$ be a fibration, where $Z$ is a compact complex surface and $C$ is a smooth curve. Let $F$ denote the general fiber. 
Then $\chi(Z) \geq \chi(F) \chi(C)$
with equality if and only if the global monodromy $\rho^{(1)}$ is trivial and every fiber of $f$ is irreducible.
\end{theorem}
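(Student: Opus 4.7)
The plan is to express $\chi(Z)$ as a fiber-bundle contribution plus defects from singular fibers and then bound each defect below using Theorem~\ref{t:t1b1}. Let $\ms D=\{y_1,\dots,y_k\}$ denote the discriminant locus (which is finite since $C$ is a curve), and set $C'=C\setminus\ms D$. By Proposition~\ref{p:disc} and Ehresmann's theorem, $f$ restricts to a smooth fiber bundle $f\inv(C')\to C'$ with fiber $F$, so $\chi(f\inv(C'))=(\chi(C)-k)\chi(F)$. Applying Lemma~\ref{l:Euler} to the smooth compact surface $Z$ with closed subvariety $f\inv(\ms D)=\bigsqcup_i Z_{y_i}$ yields
\begin{equation*}
\chi(Z)=\chi(F)\chi(C)+\sum_{i=1}^k\bigl(\chi(Z_{y_i})-\chi(F)\bigr),
\end{equation*}
reducing the inequality to showing each defect $\chi(Z_{y_i})-\chi(F)\geq 0$.

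Each $Z_{y_i}$ is a connected compact complex curve, so $b^0(Z_{y_i})=1$. Writing $\chi(F)=2-b^1(F)$ and $\chi(Z_{y_i})=1-b^1(Z_{y_i})+b^2(Z_{y_i})$ gives
\begin{equation*}
\chi(Z_{y_i})-\chi(F)=\bigl(b^1(F)-b^1(Z_{y_i})\bigr)+\bigl(b^2(Z_{y_i})-1\bigr).
\end{equation*}
Theorem~\ref{t:t1b1} shows that the first summand is nonnegative, with equality if and only if the local monodromy $\rho^{(1)}_{y_i}$ is trivial. For the second summand I would use that $b^2(Z_{y_i})$ equals the number of irreducible components of $Z_{y_i}$: the fundamental classes of the distinct irreducible components are linearly independent in $H_2(Z_{y_i};\RR)$, as can be checked by intersecting in the ambient smooth surface $Z$ with small analytic discs transverse to a single component at a smooth point of that component. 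Hence $b^2(Z_{y_i})\geq 1$ with equality if and only if $Z_{y_i}$ is irreducible.

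Combining the two estimates gives $\chi(Z)\geq\chi(F)\chi(C)$, with equality if and only if every $Z_{y_i}$ is irreducible \emph{and} every local monodromy $\rho^{(1)}_{y_i}$ is trivial. Since $\pi_1(C')$ is generated by small loops around the $y_i$ together with lifts of generators of $\pi_1(C)$, simultaneous vanishing of all local monodromies is equivalent to the representation $\rho^{(1)}$ factoring through the quotient $\pi_1(C')\twoheadrightarrow\pi_1(C)$; this is the sense in which I would read the equality condition ``the global monodromy $\rho^{(1)}$ is trivial'' in the theorem statement.

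The main delicate step is the topological identification $b^2(Z_{y_i})=\#\{\text{irreducible components}\}$, which requires some care when $Z_{y_i}$ is non-reduced or singular: one uses that the underlying topological space coincides with $(Z_{y_i})_{\mathrm{red}}$, and that linear independence of the fundamental classes of the components in $H_2(Z_{y_i};\RR)$ follows from their independence in $H_2(Z;\RR)$ (where Poincar\'e duality against transverse discs applies). Everything else is a direct assembly of Theorem~\ref{t:t1b1}, Lemma~\ref{l:Euler}, and the standard multiplicativity of $\chi$ on fiber bundles over paracompact bases.
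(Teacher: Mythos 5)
Your proof is essentially the same as the paper's: both decompose $\chi(Z)=\chi(F)\chi(C)+\sum_i\bigl(\chi(Z_{y_i})-\chi(F)\bigr)$ via Lemma~\ref{l:Euler} and the multiplicativity of $\chi$ over the smooth locus, and both estimate the defect term via $b^1(Z_{y_i})\leq b^1(F)$ (Theorem~\ref{t:t1b1}) together with the identification of $b^2(Z_{y_i})$ with the number of irreducible components.

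Your closing remark about the equality condition is more careful than what appears in the paper, and is worth dwelling on. The paper's proof derives exactly the condition you derive -- every local monodromy $\rho^{(1)}_{y_j}$ trivial and every fiber irreducible -- and then asserts parenthetically that triviality of all local monodromies ``is equivalent [to] the global monodromy being trivial.'' As you note, what the argument actually shows is that $\rho^{(1)}$ kills every small loop, i.e.\ factors through $\pi_1(C')\twoheadrightarrow\pi_1(C)$; that is strictly weaker than $\rho^{(1)}$ being the trivial representation unless the induced $\pi_1(C)$-representation is also trivial. When $C=\PP^1$ there is no distinction, but in general there is: a Kodaira fibration is a holomorphic fiber bundle $f\colon Z\to C$ with $\ms D=\emptyset$, so $\chi(Z)=\chi(F)\chi(C)$ and all fibers are irreducible, yet $\rho^{(1)}$ is nontrivial (this is forced by $\sigma(Z)>0$). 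So the ``if and only if'' as literally worded fails, and your reading -- that ``global monodromy trivial'' should be interpreted as ``$\rho^{(1)}$ factors through $\pi_1(C)$,'' equivalently all local monodromies trivial -- is the correct statement.
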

\begin{proof}
If $y \in C \setminus \ms{D}$, then $\chi(Z_y) = \chi(F)$. So then by the Leray spectral sequence for $f'$, we conclude that $\chi(Z \setminus f^{-1}(\ms{D}) ) = \chi(F) \cdot \chi( C \setminus \ms{D})$; see\cite[Section~9.3]{Spanier}. Writing $\ms{D} = \{y_1, \cdots, y_k\}$, and $Z_j \equiv f^{-1}(y_j), j = 1, \dots, k$,  by Lemma~\ref{l:Euler} we obtain
 \begin{align}
 \chi(Z) - \sum_{j=1}^k \chi(Z_j)  = \chi(F) \cdot ( \chi(C) - \sum_{j=1}^k 1), 
\end{align}
which rearranges to 
 \begin{align}
 \chi(Z) = \chi(F) \cdot \chi(C) + \sum_{j=1}^k ( \chi(Z_j) - \chi(F)).
\end{align}
So if we show that $\chi(Z_j) \geq \chi(F)$ for all $j = 1, \dots, k$, then we are done with the inequality. Since all fibers are connected curves, $\chi(Z_j) = 1 - b_1(Z_j) + b_2(Z_j)$. 
Since $b_2(Z_j)$ is the number of irreducible components of $Z_j$, we have that $b_2(Z_j) \geq b_2(F) =1$.  So the estimate is reduced to showing $b^1(Z_j) \leq b^1(F)$, which was proved in Theorem \ref{t:t1b1}. The equality $\chi(Z) = \chi(F) \chi(Y)$ holds if and only if $b^1(Z_j) = b^1(F)$ and $b^2(Z_j) = b^2(F)$ for all $j = 1, \dots, k$, which holds if and only if the local monodromy is trivial around every $y_j$ (which is equivalent the global monodromy being trivial) and that all fibers of $f$ are irreducible.  
\end{proof}

\begin{remark}
As mentioned in the introduction, the case of $n=2$ was previously proved in \cite[Chapter~III.11]{BHPV}. Our proof might seem simpler, however, they also proved the much stronger result that 
\begin{align}
\label{ebetter}
\dim H^1(\mathcal{O}_{Z_y}) \leq \dim  H^1(\mathcal{O}_{F}) = g
\end{align} 
for every $y \in C$ (which implies that $b^1(Z_y) \leq 2g$ by the exponential sequence). 
\end{remark}

\subsection{Local invariant cycles theorem}
\label{ss:ict}
We saw in the above proof that
\begin{align}
b^1(F_0) \leq b^1_{\rho}(F) \equiv \dim(H^1_{\rho}(F)).
\end{align} 
In the case of surfaces, these are equal, as can be seen by the following. 
The long exact sequence in homology yields
\begin{equation}
\begin{tikzcd}
 0 \arrow[r] & \RR^{b^1(F_0)} \arrow[r] & \RR^{1 + b^1_{\rho}(F)} \arrow[r,"\iota"] & 
 Ker (  H_{2}(U) \rightarrow H^2(U) ) \arrow[r] & 0.
\end{tikzcd}
\end{equation}
The mapping $H_{2}(U) \rightarrow H^2(U)$ is given by Poincar\'e duality, and it is not hard to see that this agrees with the intersection form on $H_2(U)$; see \cite[Proposition~2.3.4]{Dimca}. By Zariski's Lemma \cite[Lemma~III.8.2]{BHPV}, the intersection form on $H_2(U)$ has exactly a $1$-dimensional kernel given by the class of a smooth fiber, so the local invariant cycles theorem follows.

In higher dimensions, the above gives
\begin{equation}
\begin{tikzcd}
 0 \arrow[r] & \RR^{b^1(F_0)} \arrow[r] & \RR^{1 + b^1_{\rho}(F)} \arrow[r,"\iota"] & 
 Ker (  H_{2n-2}(U) \rightarrow H^2(U) ) \arrow[r] & 0.
\end{tikzcd}
\end{equation}
So one could say more in higher dimensions if something is known about the Poincar\'e duality mapping  $H_{2n-2}(U) \rightarrow H^2(U)$. To this end, we just state here the following obvious case. 
\begin{corollary}
\label{c:ict}
Let $f: Z \rightarrow C$ be as above, where $\dim(Z) = n$. If a singular fiber $F_0$ is irreducible, then the local invariant cycles theorem holds on $H^1$, that is, $b^1(F_0) = b^1_{\rho}(F)$.
\end{corollary}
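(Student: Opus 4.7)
The plan is to read the corollary off directly from the exact sequence
\[
0 \to \RR^{b^1(F_0)} \to \RR^{1+b^1_\rho(F)} \xrightarrow{\iota} \mathrm{Ker}\bigl(H_{2n-2}(U) \to H^2(U)\bigr) \to 0
\]
displayed just before its statement. A dimension count shows that the identity $b^1(F_0) = b^1_\rho(F)$ is equivalent to the assertion that $\mathrm{Ker}\bigl(H_{2n-2}(U) \to H^2(U)\bigr)$ is one-dimensional whenever $F_0$ is irreducible.

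First I would invoke the deformation retraction of Proposition~\ref{p:def} to identify $H_{2n-2}(U;\RR)$ with $H_{2n-2}(F_0;\RR)$. When $F_0$ is irreducible, its top-dimensional real homology is one-dimensional and generated by the fundamental class $[F_0]$; this is the same fact that was implicit in the proof of Theorem~\ref{t:t1n2} when we used $b_2(Z_j) = \#\{\text{irreducible components of } Z_j\}$. One way to justify it is via the normalization $\hat F_0 \to (F_0)_{\mathrm{red}}$, a surjective finite holomorphic map from a connected compact complex space that is a pseudomanifold of real dimension $2n-2$, which therefore carries a fundamental class mapping onto a generator of $H_{2n-2}(F_0;\RR) \simeq \RR$.

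Next, the second assertion of Proposition~\ref{p:def} tells us that under this identification the smooth fiber class $[F] \in H_{2n-2}(M)$ is sent to $m[F_0]$ with $m \geq 1$, hence to a nonzero element of the one-dimensional space $H_{2n-2}(U;\RR)$. Therefore the restriction $H_{2n-2}(M) \to H_{2n-2}(U)$ is already surjective, and by the long exact sequence of the pair $(U,\partial U)$ its image coincides with $\mathrm{Ker}\bigl(H_{2n-2}(U) \to H_{2n-2}(U,\partial U)\bigr)$, which via Lefschetz duality equals $\mathrm{Ker}\bigl(H_{2n-2}(U) \to H^2(U)\bigr)$. So this kernel equals all of $H_{2n-2}(U) \simeq \RR$, hence is one-dimensional, and feeding $\dim\mathrm{Ker}=1$ into the exact sequence yields $b^1(F_0) = b^1_\rho(F)$. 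The only nontrivial input is the one-dimensionality of $H_{2n-2}(F_0;\RR)$ for irreducible $F_0$; once that is in hand, the argument is a direct dimension chase through the long exact sequences already set up in the proof of Theorem~\ref{t:t1b1}.
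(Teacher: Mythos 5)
Your proof is correct and takes essentially the same approach as the paper: both reduce to the observation that $H_{2n-2}(U)\simeq H_{2n-2}(F_0)\simeq\RR$ when $F_0$ is irreducible, so the third term of the exact sequence is one-dimensional and a dimension count gives $b^1(F_0)=b^1_{\rho}(F)$. You spell out the two steps the paper leaves implicit (that $H_{2n-2}(F_0;\RR)\simeq\RR$ for irreducible $F_0$, and that $\iota$ is nonzero so the kernel is all of $\RR$ rather than $0$), but these are exactly what the paper's terser statement is built on.
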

\begin{proof} If $F_0$ is irreducible, then $H_{2n-2}(U) \simeq H_{2n-2}(F_0) \simeq \RR$. So the above exact sequence must be 
\begin{equation}
\begin{tikzcd}
 0 \arrow[r] & \RR^{b^1(F_0)} \arrow[r] & \RR^{1 + b^1_{\rho}(F)} \arrow[r,"\iota"] & 
 \RR \arrow[r] & 0,
\end{tikzcd}
\end{equation}
from which we conclude that $b^1(F_0) =  b^1_{\rho}(F)$.
\end{proof}
\begin{remark} As mentioned in the Introduction, if $Z$ were assumed to be K\"ahler, then the local invariant cycles theorem is true in higher dimensions; see \cite{Cl77}. There is a well-known example of Kodaira which is a family of smooth Hopf surfaces over the disc with central fiber $F_0$ a Hirzebruch surface with the zero and infinity sections identified; see \cite{Kodaira1964}.  As Clemens points out, the total space is not K\"ahler and the local invariant cycles theorem fails on $H^3$. However, the local invariant cycles theorem still holds on $H^1$, so this gives a non-K\"ahler example of Corollary~\ref{c:ict}.  
\end{remark}

\section{Surface singularities}
\label{s:sing}
In this section, we will discuss the class of \textit{rational tree} singularities, which will be used in Section \ref{s:tfc}. We first recall the following notion concerning resolutions of normal surface singularities; see for example \cite[p.~134]{Dur79} or \cite[p.~50]{Dimca}. 
\begin{definition}
\label{d:gsing}
Let $Y$ be a normal complex surface and $y\in Y$ a singularity of $Y$. A resolution $\mu:\tilde Y\rightarrow Y$ of $y$ is called {\em very good} if the exceptional divisor $E=\mu\inv(y)$
satisfies the following three properties:
(i)   any irreducible component of $E$ is non-singular, (ii)  any intersection of two distinct components of $E$ is transverse, and (iii) any two distinct components are either disjoint or intersect in exactly one point.
\end{definition}
For background on resolutions of normal surface singularities, we refer the reader to \cite{GH, Ishii, LaBook}. Any normal surface singularity is isolated.
There exists a unique resolution $\mu_1:\tilde Y_1\rightarrow Y$ which is minimal in the sense that any resolution $\mu:\tilde Y\rightarrow Y$ factors through $\mu_1$; namely, there exists a unique holomorphic mapping $\aaa:\tilde Y\rightarrow \tilde Y_1$ such that $\mu = \mu_1\circ\aaa$. This minimality is equivalent to the condition that no $(-1)$-curve is contained in the exceptional curve of $\mu_1$.
The minimal resolution is not necessarily a very good resolution, but we can always obtain a very good resolution from the minimal resolution after finitely many blow-ups;
see for example \cite[p.~496]{Brieskorn1986}, \cite[Chapter~8B]{Mumford1976}, or \cite[II.7]{BHPV}.
\begin{example}
\label{ex:cusp}
Let $y$ be a singularity that has a resolution whose exceptional curve consists of a single curve which is a rational curve with one node. This resolution is not very good because the unique component of the exceptional curve is singular. Blowing up at the node of the rational curve gives a resolution which is still not very good because there are two intersection points. Blowing up one at one of these intersection points finally yields a very good resolution. 
\end{example}
There exists a unique minimal very good resolution which is minimal amongst the collection of all very good resolutions. As the above example shows, the minimal very good resolution can contain a $(-1)$-curve in the exceptional divisor. To any very good resolution, we can associate the {\textit{dual graph}} of the resolution, denoted by $\Gamma$, which has a vertex for each irreducible component of $E$, with an edge between two vertices if they intersect. We next introduce the following kinds of surface singularities which appear in Theorem~\ref{t:t2}. 
\begin{definition}\label{d:tsing}
Let $Y$ be a normal complex surface and $y\in Y$ a singularity of $Y$. Let $\mu:\tilde Y \rightarrow Y$ be a very good resolution of $y$ and $E :=\mu\inv(y)$ the exceptional divisor of $\mu$. We say that $y$ is a {\em rational tree singularity} of $Y$ if all irreducible components of $E$ are rational, and the dual graph of $E$ is a tree. 
\end{definition}
Since there exists a minimal very good resolution, it follows that if the above condition holds for \textit{some} very good resolution, then the same holds for \textit{all} very good resolutions. 

Artin defined a \textit{rational singularity} in \cite{Artin1966}. These are necessarily rational tree singularities (see \cite[Lemma~1.3]{Brieskorn1967}), but the class of rational tree singularities is strictly larger than the class of rational singularities, as the following example shows.
\begin{example}
\label{ex:IV}
Let $F$ be a singular fiber of type IV of an elliptic fibration in Kodaira's notation. So $F$ consists of three $(-2)$ curves meeting at a point.
Take any one of them and also a point on it that is not a triple point, and let $\tilde F$ be the proper transform of $F$ under the blowing up at the point. Then the intersection matrix of $\tilde F$ is negative definite and hence it can be contracted to a point complex analytically by Grauert's criterion; see \cite[Theorem~III.2.1]{BHPV} and \cite{GrauertUber}. By blowing up the triple point of $\tilde F$, we obtain that 
the contracted point is a rational tree singularity. But it is not a rational singularity because $p_a(\tilde F) = 1$ while the arithmetic genus of the exceptional divisor of a rational singularity always vanishes by Artin's characterization; see \cite[Theorem~III.3.2]{BHPV}.
\end{example}

The next proposition gives an alternative characterization of the singularities in Definition~\ref{d:tsing}, in terms of the first Betti number of the exceptional curve of a very good resolution.
\begin{proposition}
\label{p:scri}
Under the assumptions in Definition \ref{d:tsing}, $y$ is a rational tree singularity if and only if $b_1(E) = 0$.
\end{proposition}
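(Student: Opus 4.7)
The plan is to compute $b_1(E)$ topologically via the normalization of the curve $E$ and read off the equivalence directly from the resulting formula. The first step is to reduce to the case that the very good resolution is \emph{strict} simple normal crossings: that is, no three components of $E$ share a common point. The conditions (i)--(iii) in Definition~\ref{d:gsing} do not themselves rule out triple points, but any such points can be resolved by further blowups (yielding a new very good resolution), and a routine local check shows that both $b_1(E)$ and the rational tree condition are preserved under these auxiliary blowups; the latter invariance is in fact already asserted in the paper immediately following Definition~\ref{d:tsing}.

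Under the strict normal crossings assumption, let $\pi:\tilde E=\bigsqcup_{i=1}^n E_i\to E$ be the normalization and let $s$ denote the number of nodes of $E$. On $E$ there is the short exact sequence of sheaves
\begin{equation*}
0\lras\RR_E\lras\pi_*\RR_{\tilde E}\lras\ms Q\lras 0,
\end{equation*}
where $\ms Q$ is a skyscraper sheaf of rank $1$ at each node. Using that $E$ is connected (a standard fact about the exceptional divisor of a resolution of a normal singularity), the associated long exact sequence gives
\begin{equation*}
0\lras\RR\lras\RR^n\lras\RR^s\lras H^1(E;\RR)\lras\bigoplus_{i=1}^n H^1(E_i;\RR)\lras 0.
\end{equation*}
Taking dimensions yields
\begin{equation*}
b_1(E)=\sum_{i=1}^n b_1(E_i)+(s-n+1)=\sum_{i=1}^n 2g(E_i)+b_1(\Gamma),
\end{equation*}
since the dual graph $\Gamma$ is connected with $n$ vertices and $s$ edges, hence $b_1(\Gamma)=s-n+1$.

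Both summands on the right are nonnegative, so $b_1(E)=0$ if and only if $g(E_i)=0$ for every $i$ (equivalently $E_i\simeq\PP^1$) and $b_1(\Gamma)=0$ (equivalently $\Gamma$ is a tree, since it is connected). This is exactly the rational tree condition. I expect the only delicate step to be the reduction to the strict normal crossings case together with the verification that the relevant invariants are preserved under the auxiliary blowups; the subsequent cohomological computation is then entirely routine.
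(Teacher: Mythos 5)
Your main computation is correct and in fact constitutes a self-contained proof of the formula $b_1(E)=\sum_i 2g(E_i)+b_1(\Gamma)$ that the paper simply cites from Dimca (Proposition~2.3.1). The paper's own proof is exactly that citation, so your route is the same in substance — it just supplies the derivation.

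The preliminary reduction, however, contains a genuine error. You claim that blowing up a point where three components of $E$ meet preserves both $b_1(E)$ and the rational tree condition; the second claim is false. Suppose three rational components $E_1,E_2,E_3$ pass through a common point $p$. Each pair intersects, so the dual graph \emph{before} the blowup at $p$ is a triangle (three vertices, three edges), which is not a tree. After the blowup the exceptional $\PP^1$ meets each proper transform $\tilde E_i$ exactly once while the $\tilde E_i$ are pairwise disjoint near $p$, so the dual graph becomes a star on four vertices, which \emph{is} a tree. Thus the rational tree condition flips from false to true. (Your claim that $b_1(E)$ is preserved is fine: both configurations have $b_1=0$.) The paper's remark following Definition~\ref{d:tsing}, which you invoke, asserts invariance of the rational tree condition only \emph{among very good resolutions}; that assertion is consistent only if a very good resolution already has no triple points. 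This is in fact the content of Durfee's definition in the cited source: the exceptional divisor is required to be a simple normal crossings divisor, which for a surface means at most two components through any point. Indeed, if the three-through-a-point configuration were admitted as a very good resolution, Proposition~\ref{p:scri} itself would fail — a wedge of three $\PP^1$'s at one point has $b_1(E)=0$ but a cyclic dual graph. So the correct reading is that conditions (i)--(iii) are to be understood as simple normal crossings, which makes your reduction step unnecessary; it is not correct to resolve hypothetical triple points by further blowup, since that changes the truth value of the condition being studied. Delete the reduction paragraph and the rest of your argument stands.
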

\begin{proof}
Let $\CCC$ be the dual graph of $E$. If $b$ is the number of closed cycles in $\CCC$ and $g$ is the sum of the genera of the irreducible components of $E$, then by \cite[Proposition~2.3.1]{Dimca}, 
\begin{align}\label{b1E}
b_1(E) = 2g + b.
\end{align}
From the definition, $y$ is a rational tree singularity if and only if $g=b=0$, which is equivalent to $b_1(E)=0$ as $b\ge 0$ and $g\ge 0$.
\end{proof}

We recall the following basic result which is proved in \cite[Chapter~1]{Dimca}. 
\begin{proposition}
\label{p:cone}
For an isolated surface singularity $p \in Y$, there exists a contractible neighborhood $\Delta_p$ of $p$ which is homeomorphic to a cone over a compact $3$-manifold $L$.
\end{proposition}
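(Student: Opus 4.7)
The plan is to apply Milnor's conical structure theorem in its analytic version; indeed, this same result (from \cite{Milnor_sing} and \cite{BV}) was invoked in the proof of Lemma~\ref{l:Euler}, so the ingredients are already available. Since the statement is purely local near $p$, I would first choose a local holomorphic embedding of a neighborhood $V$ of $p$ in $Y$ as a closed analytic subvariety of an open set in some $\CC^N$, sending $p$ to the origin. Because $Y$ is a complex surface and $p$ is isolated in $\Sing Y$, the set $V\setminus\{p\}$ is a smooth complex $2$-manifold, i.e.\ a smooth real $4$-manifold.

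Next, I would consider the restriction to $V$ of the squared Euclidean distance function $r(z)=\|z\|^2$. The key technical step is to show that there exists $\veee_0>0$ such that for every $0<\delta\le\veee_0$, the sphere $S_\delta=\{r=\delta^2\}$ intersects $V\setminus\{p\}$ transversally, so that $L_\delta:=Y\cap S_\delta$ is a smooth compact real $3$-manifold. For real algebraic sets this is Milnor's original transversality argument, using the curve selection lemma applied to the set of critical points of $r|_{V\setminus\{p\}}$ to show that these critical values cannot accumulate at $0$; the extension to the analytic setting, which is the content of \cite{BV}, uses the Lojasiewicz inequality in place of the algebraic curve selection lemma. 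I would fix such an $\veee$ and set $L:=L_\veee$, $\Delta_p:=Y\cap\overline{B_\veee(0)}$.

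Once transversality is in hand, I would build a homeomorphism $\Delta_p\simeq\Cone(L)$ by integrating an outward-pointing vector field on $V\setminus\{p\}$ that is transverse to each $S_\delta$ (obtained by projecting the gradient of $r$ onto the tangent spaces of the smooth part of $Y$, and smoothing if necessary). The flow lines of this field give a diffeomorphism $(0,1]\times L\to\Delta_p\setminus\{p\}$, which extends by sending $\{0\}\times L$ to $p$ to the desired homeomorphism with the cone on $L$. Contractibility of $\Delta_p$ is then immediate from the straight-line deformation retraction of the cone to its apex. The main obstacle is the transversality/non-accumulation of critical values of $r$ at $0$ in the analytic (rather than algebraic) setting, which is precisely what \cite{BV} supplies; after that step, the rest is a routine flow argument.
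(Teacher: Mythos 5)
The paper gives no proof of its own here; it simply cites \cite[Chapter~1]{Dimca} for what is Milnor's conical structure theorem in the local analytic setting (due to Burghelea--Verona). Your outline—local embedding into $\CC^N$, transversality of small spheres to the smooth locus via non-accumulation of critical values of the squared-distance function (curve selection lemma / Lojasiewicz), and integration of a radial-like vector field to produce the cone homeomorphism—is precisely the standard argument behind that citation, so your proposal is correct and follows essentially the same route.
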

The $3$-manifold $L$ is called the \textit{link} of the singularity. 
We will also need the following characterization of rational tree singularities.
\begin{proposition}
\label{p:scri2}
Let $Y$ be a normal complex surface and $y\in Y$ a singularity of $Y$.
Then $y$ is a rational tree singularity if and only if $b_1(L) = 0$, that is, $L$ is a $\QQ$-homology $3$-sphere. 
\end{proposition}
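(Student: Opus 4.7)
The plan is to identify $b_1(L)$ with $b_1(E)$ for a very good resolution and then invoke Proposition \ref{p:scri}.

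First, I fix a very good resolution $\mu:\tilde Y\to Y$ of $y$ with exceptional divisor $E=\mu\inv(y)$. Using Proposition \ref{p:cone}, take the contractible neighborhood $\Delta_p$ of $y$ whose boundary is the link $L$, and set $U:=\mu\inv(\Delta_p)$. After shrinking $\Delta_p$ if necessary, $U$ is a compact oriented real $4$-manifold with boundary $\ptl U\cong L$, and since $U$ is a regular neighborhood of $E$ inside the smooth manifold $\tilde Y$, it deformation retracts onto $E$.

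Next, working with rational coefficients, I combine the long exact sequence of the pair $(U,\ptl U)$ with Lefschetz duality $H_i(U,\ptl U;\QQ)\cong H^{4-i}(U;\QQ)$ and the isomorphisms $H_i(U;\QQ)\cong H_i(E;\QQ)$, $H^i(U;\QQ)\cong H^i(E;\QQ)$ coming from the retraction. Since $E$ is a topologically $2$-dimensional CW complex, $H^3(E;\QQ)=0$, and hence $H_1(U,\ptl U;\QQ)=0$. The long exact sequence then collapses to
\begin{align*}
H_2(E;\QQ)\stackrel{\iota}{\lra} H^2(E;\QQ)\lra H_1(L;\QQ)\lra H_1(E;\QQ)\lra 0.
\end{align*}
With respect to the basis of $H_2(E;\QQ)$ given by the fundamental classes $[E_i]$ of the irreducible components and the dual basis of $H^2(E;\QQ)$, the map $\iota$ is represented by the intersection matrix $(E_i\cdot E_j)$; this is the standard identification of the composition (connecting map)$\,\circ\,$(Lefschetz duality) with the intersection form on a $4$-manifold with boundary, as used in Subsection \ref{ss:ict}.

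The decisive input is Mumford's theorem that the intersection matrix of the exceptional divisor of any resolution of a normal surface singularity is negative definite (cf.\ \cite[Theorem~III.2.1]{BHPV}). Hence $\iota$ is a rational isomorphism, from which we conclude $H_1(L;\QQ)\cong H_1(E;\QQ)$, i.e.\ $b_1(L)=b_1(E)$. Applying Proposition \ref{p:scri}, $y$ is a rational tree singularity iff $b_1(E)=0$ iff $b_1(L)=0$; and by Poincar\'e duality on the closed oriented $3$-manifold $L$, the condition $b_1(L)=0$ is equivalent to $L$ being a $\QQ$-homology $3$-sphere. The main point of care will be in setting up the manifold-with-boundary structure so that Lefschetz duality and the deformation retraction both apply simultaneously, and in correctly identifying the connecting homomorphism with the intersection pairing; once these technical points are in place, Mumford's negative-definiteness is the only nontrivial algebraic ingredient.
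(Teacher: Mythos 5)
Your proposal is correct, and the overall structure (reduce $b_1(L)$ to $b_1(E)$ for a very good resolution, then invoke Proposition~\ref{p:scri}) is exactly the structure of the paper's proof. The difference is in how the equality $b_1(L)=b_1(E)$ is obtained: the paper simply cites \cite[Proposition~2.3.4]{Dimca} for this (the ``$=0$'' in the paper's proof is evidently a typo; the citation supplies $b_1(L)=b_1(E)$, and Proposition~\ref{p:scri} supplies the equivalence with being a rational tree singularity), whereas you re-derive it from scratch. Your derivation --- long exact sequence of the pair $(U,\partial U)$, Lefschetz duality $H_i(U,\partial U)\simeq H^{4-i}(U)$, deformation retraction $U\simeq E$, vanishing of $H^3(E)$ because $E$ is a $2$-complex, identification of the resulting map $H_2(E)\to H^2(E)$ with the intersection form, and Mumford's negative-definiteness to conclude that this map is an isomorphism over $\QQ$ --- is the standard proof of Dimca's proposition and is sound. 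The payoff of your version is self-containedness; the cost is that you must carry the technical points you already flag (that $U$ can be chosen to be a compact oriented $4$-manifold with boundary $L$ that simultaneously retracts to $E$, and that the composite $H_2(U)\to H_2(U,\partial U)\xrightarrow{\,\sim\,}H^2(U)$ is the intersection pairing), exactly the content that the paper's citation to Dimca is designed to outsource. One minor caveat: Mumford's negative-definiteness is logically the converse direction to Grauert's contractibility criterion, so if you want a textbook citation you should make sure the reference you give actually states the Mumford direction (BHPV does contain it, but it is worth being precise).
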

\begin{proof}
Let $\mu : \tilde{Y} \rightarrow Y$ be a very good resolution with exceptional divisor $E = \mu^{-1}(y)$. Then by \cite[Proposition~2.3.4]{Dimca}, $b_1(L) = b_1(E) = 0$, so this follows immediately from Proposition~\ref{p:scri}. 
\end{proof}

\section{Conic bundle over a surface}
\label{s:tfc}
A \textit{conic bundle} is another name for a fibration $f: Z \rightarrow Y$ with generic fiber a rational curve. The following example is useful to keep in mind. 
\begin{example} Let $Z \subset \PP_z^2 \times \PP_w^2$ be defined by 
\begin{align}
Z = \{ z_0 w_0^2 + z_1 w_1^2 + z_2 w_2^2 = 0 \ | \ [ z_0, z_1, z_2] \in \PP^2,
 [ w_0, w_1, w_2] \in \PP^2 \},
\end{align}
with $f : Z \rightarrow Y = \PP^2_z$ projection onto the first factor. It is easy to verify that $Z$ is a smooth compact threefold. The generic fiber of $f$ is a smooth conic. The discriminant locus $\ms D$ consists of the $3$ coordinate lines in $\PP^2_z$. There are $3$ non-ordinary points which are the intersection points. Over the ordinary points, the fibers are $2$ lines intersecting at a point, but over the non-ordinary points, the fiber is a double line. 
\end{example}
We will next prove the following result in the case of a threefold with conic bundle structure over a surface with rational tree singularities, which is the first part of Theorem~\ref{t:t2}.
\begin{theorem}
\label{t:tf1}
 Let $f : Z \rightarrow Y$ be a fibration, where $Z$ is a compact connected threefold and $Y$ is a compact surface with rational tree singularities. Assume that the generic fiber $F$ of $f$ is a smooth rational curve.
Then $b^1(Z_y) = 0$ for all $y \in Y$. 
\end{theorem}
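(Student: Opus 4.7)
The plan is to reduce to finitely many ``bad'' points $y_0 \in Y$ at which $b^1(Z_{y_0})$ could conceivably be positive, and then to combine the Leray spectral sequence over a small neighborhood of $y_0$ with a Gysin-type codimension argument to force the vanishing.

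First, at any $y \in Y \setminus \ms D$ the fiber is $\PP^1$, so $b^1(Z_y) = 0$. At an ordinary point $y$ of $\ms D$ lying in the smooth locus of $Y$, choose a smooth holomorphic disc $\Delta \subset Y$ through $y$ transverse to $\ms D$ with $f\inv(\Delta)$ smooth, and apply Theorem~\ref{t:t1b1} to $f\inv(\Delta)\to\Delta$ to conclude $b^1(Z_y) \leq b^1(\PP^1) = 0$. The remaining ``bad'' points lie in the discrete union of $\Sing Y$, the non-ordinary points of $\ms D$ on the smooth locus of $Y$ (Proposition~\ref{p:non}), and the fiber-dimension jump locus of Section~\ref{ss:fd}. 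It therefore suffices to fix one such isolated bad point $y_0$ and establish $b^1(Z_{y_0}) = 0$.

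Choose a contractible open neighborhood $V \subset Y$ of $y_0$ containing no other bad point, and set $V' := V \setminus \{y_0\}$, $U := f\inv(V)$, $U' := f\inv(V')$. Since $y_0$ is either smooth or a rational tree singularity of $Y$, Proposition~\ref{p:scri2} says the link of $y_0$ is a $\QQ$-homology $3$-sphere, so $H^i(V;\RR) = H^i(V';\RR) = 0$ for $i = 1,2$. By the reduction step and Proposition~\ref{p:stalk}, the stalk $(R^1f_*\RR)_y = H^1(Z_y;\RR)$ vanishes for every $y\in V'$, so $R^1f_*\RR|_V$ is a skyscraper sheaf at $y_0$ and $H^0(V;R^1f_*\RR) \simeq H^1(Z_{y_0};\RR)$. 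Using $f_*\RR = \RR_V$ (connected fibers), the Leray $5$-term exact sequence for $f:U\to V$ collapses to an isomorphism $H^1(U;\RR) \simeq H^1(Z_{y_0};\RR)$. The same argument for $f':U'\to V'$, together with the identification $R^1f_*\RR|_{V'}\simeq R^1f'_*\RR$ of Proposition~\ref{p:002}, yields $H^1(U';\RR) = 0$.

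To close the loop, interpret ``threefold'' as $Z$ being smooth, and assume $Z_{y_0}$ is of pure complex dimension $1$, so that $Z_{y_0}$ is an analytic subvariety of real codimension $4$ in the smooth real $6$-manifold $U$. Local cohomology vanishing for analytic subvarieties --- proved stratum by stratum via the Thom isomorphism on smooth strata of $Z_{y_0}$ --- gives $H^i_{Z_{y_0}}(U;\RR) = 0$ for $i < 4$, so the restriction $H^i(U;\RR) \to H^i(U';\RR)$ is an isomorphism for $i \leq 2$. Chaining the identifications,
\begin{align*}
H^1(Z_{y_0};\RR) \simeq H^1(U;\RR) \simeq H^1(U';\RR) = 0,
\end{align*}
which is the desired vanishing. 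The main obstacle is the codimension hypothesis: if $Z_{y_0}$ carries a $2$-dimensional ``contracted divisor'' component then $Z_{y_0}$ is no longer of codimension $2$, so this Gysin step breaks down. Handling such a contracted divisor likely requires a separate argument, e.g.\ slicing $Z$ by a generic $1$-parameter family transverse to the divisor at a smooth point, resolving the resulting surface, and applying Theorem~\ref{t:t1b1} there before descending the conclusion back to $Z_{y_0}$.
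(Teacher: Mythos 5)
Your scheme (reduce to finitely many bad points, localize, compare Leray $5$-term sequences over $V$ and $V'=V\setminus\{y_0\}$, and pass the vanishing across the restriction $H^1(U)\to H^1(U')$) is sound and in fact avoids the paper's naturality result, Corollary~\ref{c:5term}. The chain $H^1(Z_{y_0};\RR)\simeq H^0(V,R^1f_*\RR)\simeq H^1(U;\RR)\hookrightarrow H^1(U';\RR)=0$ only uses the Leray $5$-term sequences for $f|_U$ and $f|_{U'}$ separately; no commuting ladder is needed. The paper instead works globally over $Y'=Y\setminus N$ and feeds the four lemma with the injectivity of $\beta:H^1(Z)\to H^1(Z')$, for which the naturality of the Leray spectral sequence under open restriction is essential. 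So your local variant is, in this respect, more elementary.

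However, the gap you flag at the end is real and is not a corner case: the step $H^1_{Z_{y_0}}(U;\RR)=0$ via the Thom isomorphism requires that $Z_{y_0}$ have pure real codimension~$4$ in $U$, and the theorem does not assume equidimensional fibers, so $f^{-1}(y_0)$ can contain a $2$-dimensional component. Your proposed repair (slicing transverse to the divisorial component, resolving, applying Theorem~\ref{t:t1b1}, and ``descending'') is not a proof: a transverse slice cuts the divisor in a curve, not a point, so Theorem~\ref{t:t1b1} controls the cohomology of that slice's central fiber, and there is no clear mechanism to recover $b^1(Z_{y_0})$ from it. The clean fix — and what the paper does — is Poincar\'e--Alexander--Lefschetz duality (\cite[Theorem~VI.8.3]{Bredon}): since $U$ is a smooth real $6$-manifold and $K:=f^{-1}(y_0)$ is compact with $\dim_{\RR}K\le 4$, one has $H_1(U,U';\RR)\simeq\check H^5(K;\RR)=0$, so $H_1(U';\RR)\to H_1(U;\RR)$ is surjective and hence (with $\RR$ coefficients) $H^1(U;\RR)\hookrightarrow H^1(U';\RR)$. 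That injectivity is all your chain of identifications needs, it holds uniformly whether or not there is a contracted divisor, and it replaces the Thom-isomorphism step entirely. With that substitution your argument closes.
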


\begin{proof}
By Proposition~\ref{p:non}, the set of non-ordinary points on the discriminant locus is
a finite set, so we write $N = \{p_1, \dots, p_k\}$.
Using Corollary~\ref{c:5term}, we consider the commutative diagram
\begin{equation}
\begin{tikzcd} 
 H^1(Y) \arrow[r] \arrow[d,"\alpha"] & H^1(Z) \arrow[r] \arrow[d,"\beta"] & H^0(Y, R^1f_* \RR) \arrow[r]
\arrow[d,"\gamma"] & H^2(Y) \arrow[d,"\delta"]  \\
 H^1(Y') \arrow[r]  & H^1(Z') \arrow[r]  & H^0(Y', R^1f'_* \RR) \arrow[r] & H^2(Y') 
\end{tikzcd}
\end{equation}
where $Y' = Y \setminus N$ and $Z' = Z \setminus f^{-1}(N)$. 

By excision, for any $i \geq 0$, we have that 
\begin{align}
H^i(Y, Y') \simeq \bigoplus_{j = 1}^k H^i( \Delta_j, \Delta_j \setminus \{p_j\}),
\end{align}
where $\Delta_j$ is a neighborhood of $p_j$ as in Proposition~\ref{p:cone}. 
Since $\Delta_j$ is contractible, the long exact sequence in cohomology 
for the pair $ (\Delta_j, \Delta_j \setminus \{p_j\})$ yields
\begin{align}
H^i(  \Delta_j, \Delta_j \setminus \{p_j\}) \simeq \tilde{H}^{i-1} ( \Delta_j \setminus \{p_j\}) 
\simeq \tilde{H}^{i-1}(L_j), 
\end{align}
for $i \geq 1$, where $L_j$ is the link at $p_j$, and $\tilde{H}^*$ denotes reduced cohomology. By Proposition~\ref{p:scri2}, $L_j$ is a rational homology sphere, so we conclude that $H^i(Y, Y')$ vanishes for $i = 1,2,3$, so $\alpha$ and $\delta$ are isomorphisms. 

By Poincar\'e-Alexander-Lefschetz duality (see \cite[Theorem~VI.8.3]{Bredon}), we have 
\begin{align}
\label{e:PAD}
H_1(Z,  Z \setminus f^{-1}(N))\simeq \check{H}^5( f^{-1}(N)) \simeq H^5(f^{-1}(N)),
\end{align}
where $\check{H}^1( f^{-1}(N))$ is the direct limit of $H^1(U)$ as $U$ ranges over all open subsets of $f^{-1}(N)$ in $Z$, which is equal to the ordinary cohomology of $f^{-1}(N)$ (as follows from the Lojasiewicz triangulation theorem). 
Since all fibers are at most of dimension $2$, the right hand side of \eqref{e:PAD} vanishes, and consequently $\beta$ is injective. By the four lemma (see \cite[p.~14]{MacLane}), we have an injection
\begin{equation}
\begin{tikzcd}
\label{h00}
0 \arrow[r] & H^0(Y, R^1f_* \RR) \arrow[r,"\gamma"] & H^0(Y', R^1f'_* \RR).
\end{tikzcd}
\end{equation}
Recall from Proposition~\ref{p:stalk} that for any $y$ we have an isomorphism $(R^1 f'_* \RR)_y \simeq H^1(Z_y;\RR)$. Away from the discriminant locus, this vanishes by our assumption that the general fiber is a rational curve.  At an ordinary point of the discriminant locus, there is a transverse curve $C$ which lifts to a smooth surface, and we can use the argument in the proof of 
Theorem~\ref{t:t1b1} to see that $H^1(Z_y;\RR) = 0$ at ordinary points. We conclude that $R^1 f'_* \RR = 0$.
So then $R^1f_*\RR$ is a skyscraper sheaf supported at the non-ordinary points. But \eqref{h00} implies that $H^0(Y, R^1 f_* \RR)  = 0$, which means the stalk must also be trivial at the non-ordinary points. Consequently, $R^1f_* \RR = 0$ is the trivial sheaf, so $H^1(Z_y;\RR) = 0$ for all $y \in Y$. 
\end{proof}

The following result gives the Euler characteristic estimate in Theorem~\ref{t:t2}.
\begin{theorem}
\label{t:t2n2}
If in addition to the assumption in Theorem~\ref{t:tf1}, the fibers of $f$ are equidimensional, then
\begin{align}
\label{t2chi1}
\chi(Z) \geq 2 \cdot \chi(Y),
\end{align}
with equality if and only if every fiber is homeomorphic to $\PP^1$. 
\end{theorem}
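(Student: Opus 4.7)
The strategy is to stratify the base $Y$ according to fiber topology and apply Euler characteristic additivity, using the first part of Theorem~\ref{t:t2} (proved just above as Theorem~\ref{t:tf1}) as the key input on individual fibers.

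First, I would apply Proposition~\ref{p:topo} (with $\dim(f)=1$) together with Proposition~\ref{p:non} and the fact that $\ms D$ is one-dimensional to produce a finite subset $A\subset \ms D$ containing $\Sing Y$, $\Sing \ms D$, the non-ordinary points of $\ms D$, and any additional points needed to make $f\colon f^{-1}(\ms D\setminus A)\to \ms D\setminus A$ topologically a fiber bundle. On $Y\setminus \ms D$ the map $f$ is a smooth $\PP^1$-bundle, and on each connected component $C_j$ of $\ms D\setminus A$ it is a topological fiber bundle with some common fiber $F_j$. I would then combine Euler characteristic additivity (Lemma~\ref{l:Euler} together with the stratified extension noted in the remark following it) with the multiplicativity $\chi(E)=\chi(B)\chi(F)$ for fiber bundles to obtain
\begin{align*}
\chi(Z)-2\chi(Y)=\sum_j (\chi(F_j)-2)\,\chi(C_j)+\sum_{a\in A}(\chi(Z_a)-2).
\end{align*}

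Next, I would bound each singular fiber's Euler characteristic from below by $2$. For $a\in A$, the first part of Theorem~\ref{t:t2} gives $b^1(Z_a)=0$; since $Z_a$ is a connected curve with at least one irreducible component, $b^2(Z_a)\ge 1$, whence $\chi(Z_a)\ge 2$. For $y\in C_j$, by definition of an ordinary point there exists a transverse disc $\Delta$ with $f^{-1}(\Delta)$ smooth, and applying Theorem~\ref{t:t1b1} to the induced fibration $f^{-1}(\Delta)\to \Delta$ (whose general fiber is $\PP^1$) gives $b^1(F_j)\le b^1(\PP^1)=0$, and again $\chi(F_j)\ge 2$.

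Combining these pointwise bounds with the identity above is the main step toward the inequality $\chi(Z)\ge 2\chi(Y)$. The main technical subtlety I anticipate is controlling the signs of the $\chi(C_j)$, since these can a priori be negative if a component of $\ms D$ has high genus; resolving this will likely require finer information about the conic bundle structure (for instance, via the discriminant double cover associated to each reducible-fiber component of $\ms D$ and Riemann--Hurwitz) than the pointwise $b^1=0$ bound alone. For the equality case, every term on the right of the displayed identity must vanish: on $A$ this forces $b^2(Z_a)=1$ together with $b^1(Z_a)=0$, which for a connected reduced complex curve means $Z_a$ is an irreducible rational curve with only unibranch singularities, hence homeomorphic to $\PP^1$; the analogous analysis on each $C_j$ where $\chi(C_j)\ne 0$ forces $F_j$ homeomorphic to $\PP^1$.
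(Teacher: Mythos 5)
Your proposal follows the same strategy as the paper's own proof: stratify $Y$ into $Y\setminus\ms D$, the punctured discriminant, and a finite set $A$; apply Proposition~\ref{p:topo} to get a topological fiber bundle over the middle stratum; bound $\chi(Z_y)\ge 2$ using $b^1(Z_y)=0$ from Theorem~\ref{t:tf1} together with $b^2(Z_y)\ge 1$; and in the equality case pass to the normalization (irreducible plus $b^1=0$ forces the normalization map to be injective, so $Z_y$ is topologically $\PP^1$). These are precisely the paper's steps, and your identity $\chi(Z)-2\chi(Y)=\sum_j(\chi(F_j)-2)\,\chi(C_j)+\sum_{a\in A}(\chi(Z_a)-2)$ is the correct bookkeeping behind them.

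The sign concern you raise at the end is the decisive point, and you were right to flag it rather than pass over it. The paper asserts $\chi\bigl(f^{-1}(\ms D\setminus\ms N)\bigr)\ge 2\,\chi(\ms D\setminus\ms N)$ directly from the pointwise fiber bound $\chi\ge 2$ together with multiplicativity of $\chi$ over a bundle, but this deduction silently assumes that each connected component of $\ms D\setminus\ms N$ has non-negative Euler characteristic. Your identity makes the issue explicit: a component $C_j$ of negative Euler characteristic (a discriminant component of genus $\ge 2$, say) with fiber two lines ($\chi(F_j)=3$) contributes a strictly negative term that the pointwise estimate cannot control. Likewise, your observation that the equality analysis only constrains $F_j$ when $\chi(C_j)\ne 0$ is accurate, and is a subtlety the paper's equality argument does not address. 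So while your proposal does not itself close these gaps, it reproduces the paper's argument faithfully and correctly pinpoints the step where it needs more than the $b^1=0$ bound; whatever supplementary information about the conic-bundle structure is needed (such as the discriminant double cover and Riemann--Hurwitz, as you suggest) is needed by the paper's proof as well.
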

\begin{proof}
If all fibers are equidimensional, Theorem~\ref{t:tf1} implies that $\chi(Z_y) \geq 2$ for every $y \in Y$. 
We next use Proposition \ref{p:topo} to conclude that there is a finite 
subset $\mathscr{N} \subset \mathscr{D}$, so that $f : f^{-1} ( \ms{D} \setminus \ms{N}) \rightarrow \ms{D} \setminus \ms{N}$ is a topological fiber bundle over each connected component, with the fiber having Euler characteristic at least $2$. So from \cite[Section~9.3]{Spanier}, we have 
\begin{align}
\chi(f^{-1} (\ms{D} \setminus \ms{N})) \geq  2 \chi( \ms{D} \setminus \ms{N}).
\end{align}
To calculate the Euler characteristic, we use Lemma~\ref{l:Euler} and compute
\begin{align}
\begin{split}
\chi(Z) &= \chi ( f^{-1}(Y \setminus \ms{D})) + \chi( f^{-1}(\ms{D} \setminus \ms{N})) + \chi(f^{-1}(\ms{N}))\\
& \geq 2 \chi(Y \setminus \ms{D}) + 2 \chi( \ms{D} \setminus \ms{N}) + 2 \chi(\ms{N}) = 2 \chi(Y).
\end{split}
\end{align}
Finally, if we have equality in \eqref{t2chi1}, then we conclude that any singular fiber $Z_y$ is irreducible with $b^1(Z_y) = 0$. Let $\nu: \check{Z}_y \rightarrow Z_y$ denote the normalization. Since $Z_y$ is irreducible, so is $\check{Z}_y$. The normalization mapping must be injective since $b^1(Z_y) = 0$. Consequently, $\check{Z_y} \simeq \PP^1$, and $Z_y$ is homeomorphic to $\PP^1$. 
\end{proof}

\section{Leray spectral sequence}
\label{s:Leray}
As mentioned in the Introduction, in this section we prove a naturality property of the Leray spectral sequence with respect to inclusions of open subsets.
All sheaves considered will be sheaves of abelian groups.
Let $M$ be a topological space and $\ms S\rightarrow M$ a sheaf over $M$. 
As in \cite[p.~37]{GunningIII}, let $\ms A^0(\ms S)$ be the sheaf over $M$ that is canonically obtained from $\ms S$ by considering discontinuous sections, and let $\ms B^1(\ms S)$ be the quotient sheaf $\ms A^0(\ms S)/\ms S$. 
Repeating this procedure, by putting $\ms A^p(\ms S)=\ms A^0(\ms B^p(\ms S))$ and $\ms B^{p+1}(\ms S) = \ms A^{p}(\ms S)/\ms B^{p}(\ms S)$ for any $p\ge 0$, we obtain the exact sequence
\begin{align}\label{can1}
0\lras \ms S \lras \ms A^0(\ms S) \stackrel{\ptl}\lras \ms A^1(\ms S)\stackrel{\ptl}\lras \ms A^2(\ms S)\lras\cdots,
\end{align}
which is the canonical resolution of $\ms S$. 
All sheaves $\ms A^p(\ms S)$ are flabby, and the cohomology groups of the associated sequence of global sections are the cohomology groups $H^p(M, \ms S)$.

For any non-empty 
subset $M'\subset M$, we use the notation $\ms S'$ to mean the restriction of a sheaf $\ms S$ onto $M'$.
The following proposition can be easily seen just by faithfully following the definition of the canonical resolution and noting that any point of $M'$ has a neighborhood in $M'$ that is open in $M$, provided that $M'$ is an open subset of $M$.

\begin{proposition}\label{p:001} If $M'$ is an open subset of $M$, then
for any $p\ge 0$, there exist natural isomorphisms
$$\big(\ms A^p(\ms S)\big)' \simeq \ms A^p(\ms S')\qandq
\big(\ms B^p(\ms S)\big)' \simeq \ms B^p(\ms S')$$
of sheaves over $M'$. Under the first isomorphism, the sequence obtained from \eqref{can1} as the restriction onto $M'$ is exactly the canonical resolution of the sheaf $\ms S'$ over $M'$.
\end{proposition}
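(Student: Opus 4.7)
The plan is to prove both isomorphisms simultaneously by induction on $p\ge 0$, using the recursive definitions $\ms A^p(\ms S)=\ms A^0(\ms B^p(\ms S))$ and $\ms B^{p+1}(\ms S)=\ms A^p(\ms S)/\ms B^p(\ms S)$ with the convention $\ms B^0(\ms S)=\ms S$. The essential enabling fact I will use is that because $M'$ is open in $M$, every open subset of $M'$ is open in $M$, and the open neighborhoods of any $x\in M'$ taken inside $M'$ are cofinal among those taken inside $M$. This yields a canonical identification $\ms S_x=\ms S'_x$ for every $x\in M'$, and more generally $\ms F_x=(\ms F)'_x$ for any sheaf $\ms F$ on $M$.

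For the base case $p=0$, I would unwind the definition: for any open $U$ in $M$, $\ms A^0(\ms S)(U)=\prod_{x\in U}\ms S_x$ is the group of all discontinuous sections. For $V\subset M'$ open, openness of $M'$ makes $V$ open in $M$ as well, so
\[
(\ms A^0(\ms S))'(V)=\ms A^0(\ms S)(V)=\prod_{x\in V}\ms S_x=\prod_{x\in V}\ms S'_x=\ms A^0(\ms S')(V),
\]
and the identification is manifestly compatible with restriction maps, hence defines a canonical isomorphism of sheaves on $M'$. The case of $\ms B^0$ is tautological, since $\ms B^0(\ms S)=\ms S$.

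For the inductive step, suppose $(\ms B^p(\ms S))'\simeq\ms B^p(\ms S')$ has been established. Applying the base case construction with $\ms B^p(\ms S)$ in place of $\ms S$ gives $(\ms A^0(\ms B^p(\ms S)))'\simeq\ms A^0((\ms B^p(\ms S))')$, and the inductive hypothesis rewrites the right-hand side as $\ms A^0(\ms B^p(\ms S'))=\ms A^p(\ms S')$; this is the desired isomorphism at level $p$ for $\ms A$. For $\ms B^{p+1}$, I will use that restriction of sheaves to an open subset is exact (it preserves stalks) and in particular commutes with quotients, so
\[
(\ms B^{p+1}(\ms S))' = \big(\ms A^p(\ms S)/\ms B^p(\ms S)\big)' \simeq (\ms A^p(\ms S))'/(\ms B^p(\ms S))' \simeq \ms A^p(\ms S')/\ms B^p(\ms S') = \ms B^{p+1}(\ms S'),
\]
closing the induction.

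For the final assertion concerning the resolution \eqref{can1}, I would observe that the differential $\ptl\colon\ms A^p(\ms S)\to\ms A^{p+1}(\ms S)$ is the composition of the canonical surjection $\ms A^p(\ms S)\to\ms B^{p+1}(\ms S)$ with the canonical inclusion $\ms B^{p+1}(\ms S)\hookrightarrow\ms A^{p+1}(\ms S)$. Since all the isomorphisms constructed above arise pointwise from the stalk identification $\ms F_x=(\ms F)'_x$, they automatically commute with these structural maps, and the restricted differentials therefore match those of the canonical resolution of $\ms S'$. There is no real obstacle here; the only point requiring care is the coherent matching of stalks $(\ms B^p(\ms S))'_x\simeq\ms B^p(\ms S')_x$ at each inductive step, and this comes for free from the exactness of the restriction functor. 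The openness of $M'$ is used decisively at the base case, where it permits the identification of sections of $(\ms A^0(\ms S))'$ over a set $V\subset M'$ with products of stalks of $\ms S$ over $V$.
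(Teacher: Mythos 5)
Your proof is correct and takes essentially the same approach as the paper, which omits the details and simply remarks that the result follows by faithfully unwinding the definition of the canonical resolution together with the observation that, since $M'$ is open in $M$, open neighborhoods in $M'$ are open in $M$. Your induction on $p$, the base case computation $\ms A^0(\ms S)(V)=\prod_{x\in V}\ms S_x$ with the stalk identification $\ms S_x=\ms S'_x$, and the use of exactness of restriction to handle the quotient defining $\ms B^{p+1}$ are precisely the details the paper leaves to the reader.
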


Next, let $\ms S$ be a sheaf over a topological space $M$ and $f:M\rightarrow N$ a continuous mapping to another topological space $N$. Further, let $N'\subset N$ be any non-empty open subset and suppose that the open subset $M':=f\inv(N')$ is non-empty.
We denote $f':=f|_{M'}:M'\rightarrow N'$. For a sheaf $\ms T$ over $N$, the restriction onto $N'$ will be also denoted by $\ms T'$.
\begin{proposition}\label{p:002}
For any $p\ge 0$, there is a natural isomorphism 
\begin{align}\label{isom01}
(R^pf_*\ms S)'\simeq R^pf'_*\ms S'.
\end{align}
For a short exact sequence 
\begin{align}\label{ases1}
0\lras \ms S_1\lras \ms S_2\lras \ms S_3\lras 0
\end{align}
of sheaves over $M$, under the isomorphism \eqref{isom01} 
for each sheaf below,
the restriction of the direct image exact sequence
\begin{align}\label{ales1}
0 \lras f_*\ms S_1\lras f_*\ms S_2\lras f_*\ms S_3\lras
R^1f_*\ms S_1\lras\cdots
\end{align}
onto $N'$ is identified with the direct image exact sequence
$$
0 \lras f'_*\ms S'_1\lras f'_*\ms S'_2\lras f'_*\ms S'_3\lras
R^1f'_*\ms S'_1\lras\cdots
$$
over $N'$ obtained from the restriction exact sequence $0\rightarrow \ms S'_1\rightarrow \ms S'_2\rightarrow \ms S'_3\rightarrow 0$ of \eqref{ases1} onto $M'$ as the direct image sequence under $f'$.
\end{proposition}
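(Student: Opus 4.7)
The plan is to realize both higher direct image sheaves via the canonical flabby resolution $\ms A^\bullet(\ms S)$ introduced just before Proposition~\ref{p:001}, and then transport the restriction-compatibility from the sheaf level (Proposition~\ref{p:001}) through the direct image functor to the level of cohomology. Since flabbiness is preserved by $f_*$ and by restriction to an open subset, and since flabby sheaves are $f_*$-acyclic, we have a canonical identification $R^pf_*\ms S \simeq \mathcal H^p(f_*\ms A^\bullet(\ms S))$, with the analogous identification for $f'$ and $\ms S'$. This reduces the problem to comparing complexes of sheaves.

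For the first assertion, I would first observe that for any sheaf $\ms T$ on $M$ and any open $V\subset N$ there is a tautological isomorphism $(f_*\ms T)|_V \simeq f'_*(\ms T|_{f^{-1}(V)})$, since for open $W\subset V$ both sides evaluate to $\ms T(f^{-1}(W))$. Taking $V = N'$ and $\ms T = \ms A^p(\ms S)$, and combining with the isomorphism $(\ms A^p(\ms S))' \simeq \ms A^p(\ms S')$ of Proposition~\ref{p:001}, I obtain natural isomorphisms $(f_*\ms A^p(\ms S))' \simeq f'_*\ms A^p(\ms S')$ that commute with the differentials $\ptl$ of \eqref{can1}. Since restriction of sheaves to an open set is exact (kernels, images, and hence cohomology sheaves are computed stalkwise), these isomorphisms pass to cohomology, yielding
\begin{align*}
(R^pf_*\ms S)' \simeq \mathcal H^p\bigl((f_*\ms A^\bullet(\ms S))'\bigr) \simeq \mathcal H^p(f'_*\ms A^\bullet(\ms S')) \simeq R^pf'_*\ms S'.
\end{align*}

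For the second assertion, I would invoke functoriality of the canonical construction: the short exact sequence \eqref{ases1} induces termwise a short exact sequence of canonical resolutions $0\to\ms A^\bullet(\ms S_1)\to \ms A^\bullet(\ms S_2)\to \ms A^\bullet(\ms S_3)\to 0$. Applying $f_*$ preserves exactness since each $\ms A^p(\ms S_i)$ is flabby, and the long exact cohomology sequence of the resulting short exact sequence of complexes is exactly \eqref{ales1}. Restricting the entire diagram to $N'$, and applying the natural isomorphisms of the previous paragraph to each term, identifies this long exact sequence with the one obtained by running the same construction starting from $0\to\ms S'_1\to \ms S'_2\to \ms S'_3\to 0$ on $M'$; compatibility of the connecting maps is automatic because they come from the snake lemma applied to naturally isomorphic short exact sequences of complexes.

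The main obstacle is essentially bookkeeping: one must verify that the flabby-resolution description of $R^pf_*$ supplies \emph{canonical} and \emph{natural} isomorphisms, both in the open set $V\subset N$ and in the sheaf $\ms S$, so that the compatibilities of Proposition~\ref{p:001} propagate unambiguously through $f_*$ and through the cohomology functor, and that the $f_*$-acyclicity of flabby sheaves (together with the fact that $f_*$ of a flabby sheaf is flabby) is used consistently. These are classical facts in sheaf-theoretic homological algebra, and no further complication arises beyond faithful execution.
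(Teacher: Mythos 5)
Your proof is correct and is essentially the argument the paper has in mind: the paper itself omits the proof, deferring to ``faithfully following'' Gunning's construction of $R^pf_*$ via the canonical flabby resolution (\cite[Theorem~F.4]{GunningIII}), and your write-up is precisely that — the tautological identity $(f_*\ms T)|_V \simeq f'_*(\ms T|_{f^{-1}(V)})$, combined with Proposition~\ref{p:001}, exactness of restriction, exactness of the canonical resolution functor, and $f_*$-acyclicity of flabby sheaves. Nothing to add beyond noting that your justification ``flabbiness is preserved by restriction'' is not actually used (exactness of restriction to an open set suffices, and you do invoke that), while ``flabbiness is preserved by $f_*$'' is only needed if one wants to iterate, not for the steps you carry out.
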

This proposition can also be shown by faithfully following the definition of the direct image sheaf including the higher ones
and the proof for obtaining \eqref{ales1} from \eqref{ases1}
(see \cite[Theorem~F.4]{GunningIII}), and we omit a proof.

In the setting of the previous proposition, for any $p,q\ge 0$, we put
\begin{align}\label{Apq}
A^{p,q}&:= \CCC\big(N, \ms A^p\big(f_*\,\ms A^q(\ms S)\big)\big),\\
B^{p,q}&:= \CCC\big(N', \ms A^p\big(f'_*\,\ms A^q(\ms S')\big)\big).
\label{Bpq}
\end{align}
where $\CCC$ denotes the space of (continuous) sections.
For any $p,q\ge 0$, we denote
$
\ptl: A^{p,q} \rightarrow A^{p+1,q}
$
for the homomorphism induced from the sheaf homomorphism 
$\ptl:\ms A^p\big(f_*\,\ms A^q(\ms S)\big)
\rightarrow 
\ms A^{p+1}\big(f_*\,\ms A^q(\ms S)\big)
$
in the canonical resolution of the sheaf $f_*\,\ms A^q(\ms S)$ on $N$.
Similarly, we denote
$
\ptl': B^{p,q} \rightarrow B^{p+1,q}
$
for the homomorphism induced from the analogous sheaf homomorphism on $N'$.
The homomorphisms $(-1)^{p+q}\ptl:A^{p,q}\rightarrow A^{p+1,q}$ and 
$(-1)^{p+q}\ptl':B^{p,q}\rightarrow B^{p+1,q}$ will be used as the first differential for the double complexes formed by $A^{p,q}$ and $B^{p,q}$ respectively.

To discuss another differential for these double complexes,
we recall that
for any $p\ge 0$, any sheaf homomorphism $\theta:\ms R_1\rightarrow \ms R_2$ over $N$ naturally induces a sheaf homomorphism $\theta^{(p)}:\ms A^p(\ms R_1)\rightarrow \ms A^p(\ms R_2)$; see the proof of \cite[Theorem~D.2]{GunningIII}.
Applying this to the homomorphism $f_*\big(\ms A^q(\ms S)\big)\rightarrow f_*\big(\ms A^{q+1}(\ms S)\big)$ induced from $\ptl:\ms A^q(\ms S)\rightarrow \ms A^{q+1}(\ms S)$ and $f$,
for any $p,q\ge 0$, we obtain a homomorphism
\begin{align}\label{hom100}
\ms A^p\big(f_*\big(\ms A^q(\ms S)\big)\big)\lras \ms A^p\big(f_*\big(\ms A^{q+1}(\ms S)\big)\big).
\end{align}
Taking the induced mapping on the spaces of global sections, we obtain the second differential
$
\ol\ptl: A^{p,q} \rightarrow A^{p,q+1}.
$
Similarly, we obtain the second differential for $B^{p,q}$, which we  denote by
$
\ol\ptl': B^{p,q} \rightarrow B^{p,q+1}.
$
We then have
\begin{proposition}\label{p:003}
In the setting of the previous proposition, for any $p,q\ge 0$, 
if $\phi:A^{p,q} \rightarrow B^{p,q}$ is the restriction homomorphism of the domain,
then it satisfies 
the commutativity $\phi\circ\ptl = \ptl'\circ\phi$ and 
$\phi\circ\ol\ptl = \ol\ptl'\circ\phi$.
\end{proposition}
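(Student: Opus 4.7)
The plan is to unwind the definition of $\phi$ into its constituent natural operations and verify, one differential at a time, that restriction commutes with each of them. Concretely, setting $\ms T := f_*\ms A^q(\ms S)$, the map $\phi:A^{p,q}\rightarrow B^{p,q}$ factors as the ordinary sheaf-theoretic restriction of sections $\Gamma(N,\ms A^p(\ms T))\rightarrow \Gamma(N',\ms A^p(\ms T)|_{N'})$ followed by the chain of natural isomorphisms $\ms A^p(\ms T)|_{N'}\simeq \ms A^p(\ms T|_{N'})\simeq \ms A^p(f'_*(\ms A^q(\ms S)|_{M'}))\simeq \ms A^p(f'_*\ms A^q(\ms S'))$ provided by Propositions~\ref{p:001} and~\ref{p:002}. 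All three isomorphisms are canonical, so it suffices to check that each is compatible with the relevant differential.

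For the horizontal differential $\ptl$, the argument is straightforward. By definition, $\ptl:A^{p,q}\rightarrow A^{p+1,q}$ is the map on global sections induced by the differential $\ms A^p(\ms T)\rightarrow \ms A^{p+1}(\ms T)$ in the canonical resolution of $\ms T$. Restriction of sections is tautologically compatible with any morphism of sheaves, so the first factor of $\phi$ intertwines the sheaf-level differential with its restriction. The second factor is handled by Proposition~\ref{p:001}, which identifies the restriction of the canonical resolution of $\ms T$ with the canonical resolution of $\ms T|_{N'}$; in particular, the restricted differential is identified with $\ms A^p(\ms T|_{N'})\rightarrow \ms A^{p+1}(\ms T|_{N'})$. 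Composing with Proposition~\ref{p:002} replaces $\ms T|_{N'}$ by $f'_*\ms A^q(\ms S')$, yielding $\phi\circ\ptl=\ptl'\circ\phi$.

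For the vertical differential $\ol\ptl$, one extra layer of naturality is required: the construction $\theta\mapsto \theta^{(p)}$ from \cite[Theorem~D.2]{GunningIII}, which sends a sheaf morphism $\theta:\ms R_1\rightarrow\ms R_2$ over $N$ to the induced morphism $\theta^{(p)}:\ms A^p(\ms R_1)\rightarrow \ms A^p(\ms R_2)$, is itself natural with respect to restriction to an open subset. This is immediate from Gunning's definition, since $\theta^{(p)}$ is built stalkwise from $\theta$ and restriction preserves stalks at points of $N'$. Since $\ol\ptl$ arises by applying this construction to the morphism $f_*\ms A^q(\ms S)\rightarrow f_*\ms A^{q+1}(\ms S)$ obtained by pushing forward the canonical-resolution differential on $M$, the commutativity $\phi\circ\ol\ptl=\ol\ptl'\circ\phi$ follows from three compatibilities stacked together: $f_*$ with restriction (Proposition~\ref{p:002}), the canonical resolution with restriction (Proposition~\ref{p:001}), and $\theta\mapsto\theta^{(p)}$ with restriction.

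The proof is therefore a diagram chase, and the main obstacle is purely bookkeeping, namely keeping the stack of three or four natural isomorphisms aligned while verifying each commutative square. No serious conceptual difficulty should appear: the canonical resolution is constructed locally, the direct-image and restriction functors are defined section-wise, and the assertion that these operations commute with restriction to an open subset is essentially immediate once the definitions are spelled out, which is precisely the content that Propositions~\ref{p:001} and~\ref{p:002} were designed to record.
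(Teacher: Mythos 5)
Your proposal is correct and follows essentially the same route as the paper: both decompose $\phi$ through the chain of natural isomorphisms supplied by Propositions~\ref{p:001} and~\ref{p:002} and then check commutativity with each of $\ptl$ and $\ol\ptl$ by inspecting the corresponding square of sheaf morphisms. The one point you make explicit that the paper leaves implicit is the observation that the construction $\theta\mapsto\theta^{(p)}$ from Gunning is itself compatible with restriction to open subsets, which is precisely the verification needed to identify the restricted map \eqref{hom100} with its analogue over $N'$ after applying \eqref{isom007}--\eqref{isom009}.
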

\begin{proof}
From the previous two propositions, we have natural isomorphisms
\begin{align}\label{isom007}
\Big(
\ms A^p\big(f_*\ms A^q(\ms S)\big)
\Big)'
&\simeq
\ms A^p\Big(\big(f_*\ms A^q(\ms S)\big)'\Big)
\quad ({\text{by Proposition\,\ref{p:001}}})\\
&\simeq
\ms A^p\Big(f'_*\big(\big(\ms A^q(\ms S)\big)'\big)\Big)
\quad ({\text{by Proposition\,\ref{p:002}}})\label{isom008}\\
&\simeq
\ms A^p\Big(f'_*\big(\ms A^q(\ms S')\big)\Big)
\quad ({\text{by Proposition\,\ref{p:001}}})\label{isom009}
\end{align}
Therefore, we obtain the commutative diagram
\begin{equation}
\begin{CD}\label{}
\ms A^p\big(f_*\ms A^q(\ms S)\big) @>{\ptl}>> 
\ms A^{p+1}\big(f_*\ms A^q(\ms S)\big)\\
@AAA @AAA\\
\ms A^p\big(f'_*\ms A^q(\ms S')\big) @>{\ptl'}>> 
\ms A^{p+1}\big(f'_*\ms A^q(\ms S')\big)\\
\end{CD}
\end{equation}
where the two vertical maps are the inclusion mappings as open subsets respectively.
Taking the spaces of sections over $N$ and $N'$ of these sheaves respectively and letting ${\phi}$ be the restriction of the domains (from $N$ to $N'$), we obtain the first commutativity.

Similarly, using the homomorphism \eqref{hom100}, we have the following commutative diagram
\begin{equation}
\begin{CD}\label{}
\ms A^p\big(f_*\big(\ms A^q(\ms S)\big)\big) @>>> 
\ms A^{p}\big(f_*\big(\ms A^{q+1}(\ms S)\big)\big)\\
@AAA @AAA\\
\Big(\ms A^p\big(f_*\big(\ms A^q(\ms S)\big)\big)\Big)' @>>> 
\Big(\ms A^{p}\big(f_*\big(\ms A^{q+1}(\ms S)\big)\big)\Big)'
\end{CD}
\end{equation}
where the vertical arrows are the inclusion mappings as open subsets.
But from \eqref{isom007}--\eqref{isom009}, 
the lower left (resp.\,the lower right) is canonically isomorphic to 
$\ms A^p\big(f'_*\big(\ms A^q(\ms S')\big)\big)$
(resp.
$\ms A^{p}\big(f'_*\big(\ms A^{q+1}(\ms S')\big)\big)$).
Taking the spaces of global sections, 
the restriction mapping satisfies
the second commutativity.
\end{proof}
In the above situation, the homomorphisms $\ptl$ and $\ol\ptl$ satisfy $\ptl^2 = \ol\ptl^2 = \ptl\ol\ptl - \ol\ptl\ptl=0$, which means that 
the collection $A:=\{A^{p,q},(-1)^{p+q}\ptl,\ol\ptl\}_{p,q\ge0}$
constitutes a double cochain complex.
For any $n\ge 0$, put $A^n = \oplus_{p+q=n}A^{p,q}$ and let $\{A^n, (-1)^{p+q}\ptl + \ol \ptl\}_{n\ge 0}$ be the single cochain complex  associated with the double complex $A$.
Let $_1F^i(A^n) := \oplus_{p+q = n, p\ge i}\, A^{p,q}$
and $_2F^i(A^n) := \oplus_{p+q = n, q\ge i}\, A^{p,q}$ be the two filtrations on the single complex.
Each of these gives rise to a spectral sequence \cite[Theorem~C.12]{GunningIII}, for which we denote by $\{{}_{1}E_r^{p,q}(A), \,_1d_r\}$ and  $\{{}_{2}E_r^{p,q}(A), \,_2d_r\}$ respectively. The second terms of these are, respectively, 
\begin{align}\label{E2}
_{1}E_2^{p,q}(A) = H_{\ptl}^p\big(H_{\ol\ptl}^q(A^{*,*})\big)
\qandq
_{2}E_2^{p,q}(A) = H_{\ol\ptl}^p\big(H_{\ptl}^q(A^{*,*})\big).
\end{align}
Both of these spectral sequences are convergent, and therefore the limit terms are components of the graded modules associated with the two filtrations induced on the cohomology group $H^n(A^*)$ of the single complex. We denote the analogous two spectral sequences obtained from the double complex $B:=\{B^{p,q},(-1)^{p+q}\ptl',\ol\ptl'\}_{p,q\ge0}$ by $\{{}_{1}E_r^{p,q}(B), \,_1d'_r\}$ and  $\{{}_{2}E_r^{p,q}(B), \,_2d'_r\}$ respectively.

Proposition \ref{p:003} means that the collection $\{\phi:A^{p,q}\rightarrow B^{p,q}\}_{p,q\ge 0}$ 
formed by the restriction mapping
is a homomorphism from 
$A$ to $B$ as double cochain complexes.
By \cite[Corollary~C.15]{GunningIII}, any homomorphism between double cochain complexes naturally induces 
homomorphisms ${}_{\nu}\phi_r:{}_{\nu}E_r^{p,q}(A)\rightarrow {}_{\nu}E_r^{p,q}(B)$ for $\nu=1,2$ between the terms of the associated spectral sequences for any $p,q,r\ge 0$ and they commute with the differentials $_{\nu}d_r$ and $_{\nu}d'_r$.
Therefore, we have obtained: 
\begin{proposition}
For any $\nu=1,2$ and $p,q,r\ge 0$, there is a commutative diagram 
\begin{equation}
\begin{CD}\label{d:01}
{}_{\nu}E_r^{p,q}(A) @>{{}_{\nu}d_r}>> {}_{\nu}E_r^{p+r,q-r+1}(A)\\
@V{{}_{\nu}\phi_r}VV @VV{{}_{\nu}\phi_r}V\\
{}_{\nu}E_r^{p,q}(B) @>{{}_{\nu}d'_r}>> {}_{\nu}E_r^{p+r,q-r+1}(B)
\end{CD}
\end{equation}
where $_{\nu}E_r^{i,j} = 0$ if $j<0$.
\end{proposition}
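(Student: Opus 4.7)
The plan is to reduce this proposition to the standard homological-algebra fact that a morphism of double cochain complexes induces, at every page, a morphism of each of the two associated spectral sequences commuting with its differential. The key point is that all the genuine content has already been extracted in Proposition~\ref{p:003}, so what remains is purely formal.

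First I would assemble the componentwise restriction maps $\phi:A^{p,q}\rightarrow B^{p,q}$ into a single family indexed by $(p,q)$ and check that this family is a morphism of double cochain complexes from $A$ to $B$. This requires compatibility with both families of differentials: $(-1)^{p+q}\ptl$ versus $(-1)^{p+q}\ptl'$, and $\ol\ptl$ versus $\ol\ptl'$. The $(-1)^{p+q}$ signs are the same on both sides and play no role, so the check reduces to the identities $\phi\circ\ptl = \ptl'\circ\phi$ and $\phi\circ\ol\ptl = \ol\ptl'\circ\phi$, which are exactly the two conclusions of Proposition~\ref{p:003}.

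Second, I would form the total complexes $A^n := \bigoplus_{p+q=n}A^{p,q}$ and $B^n := \bigoplus_{p+q=n}B^{p,q}$ with their total differentials, and observe that $\phi^n := \bigoplus_{p+q=n}\phi$ is a cochain map preserving both filtrations $_1F^i$ and $_2F^i$, since $\phi$ respects bidegree. At this point I would invoke \cite[Corollary~C.15]{GunningIII}, which states that any filtration-preserving cochain map between filtered complexes induces, at each page $r$ and each bidegree $(p,q)$, a morphism between the associated spectral sequences that commutes with the page's differential. Applied with $\nu = 1$ and $\nu = 2$ separately, this directly produces the required maps $_\nu\phi_r$ and the commutativity of diagram~\eqref{d:01}.

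The only step specific to the present setup is the verification that $\phi$ intertwines the two differentials of the double complex, and this has already been done in Proposition~\ref{p:003}; thereafter, everything is a direct citation of the general spectral-sequence machinery. Consequently, I do not expect any real obstacle: the proposition is essentially a packaging of Proposition~\ref{p:003} into the language of morphisms of filtered complexes and their spectral sequences.
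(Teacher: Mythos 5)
Your proposal matches the paper's proof essentially verbatim: both use Proposition~\ref{p:003} to establish that $\{\phi\}$ is a morphism of double cochain complexes and then invoke \cite[Corollary~C.15]{GunningIII} to obtain the commutativity with $_\nu d_r$ at every page. The only difference is that you unpack the intermediate step of forming total complexes and filtrations, which the paper leaves implicit in the citation.
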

Now, as in the proof of \cite[Theorem~F.5]{GunningIII},
$$
H_{\ptl}^q(A^{*,p})
\simeq
\begin{cases}
\CCC(M,\ms A^p(\ms S)) & {\text{if $q=0$,}}\\
0 & {\text{if $q>0$.}}
\end{cases}
$$
and therefore from \eqref{E2} 
and the definition of the sheaf cohomology group,
$$
_{2}E_2^{p,q}(A) = H_{\ol\ptl}^p\big(H_{\ptl}^q(A^{*,*})\big)
\simeq
\begin{cases}
H^p(M,\ms S)& {\text{if $q=0$,}}\\
0& {\text{if $q>0$.}}
\end{cases}
$$
On the other hand, as in the proof of \cite[Theorem~F.5]{GunningIII}
for any $p,q\ge 0$,
\begin{align}\label{isom010}
H^q_{\ol\ptl}(A^{p,*})
\simeq
\CCC\big(N, \ms A^p(R^qf_*\ms S)\big),
\end{align}
and therefore, from \eqref{E2} 
and the definition of the sheaf cohomology group,
\begin{align}\label{isom011}
_{1}E_2^{p,q}(A)=H^p_{\ptl}\big(H^q_{\ol\ptl}(A^{*,*})\big)
\simeq
H^p(N,R^qf_*\ms S).
\end{align}
Using Propositions \ref{p:001}--\ref{p:003}, by faithfully following the proof of \eqref{isom010} and \eqref{isom011}, we can see that 
the mapping $_1\phi_2: {}_1 E_2^{p,q}(A)\rightarrow {}_1 E_2^{p,q}(B)$ in the commutative diagram \eqref{d:01} for the case $r=2$ is just the restriction mapping from $N$ to $N'$. 
This finishes the proof of Theorem~\ref{t:Leray}. Corollary~\ref{c:5term} follows easily from this. 

\begin{remark} With more work, this naturality can also be seen to follow from naturality of the Grothendieck spectral sequence; see~\cite{Haas}. 
\end{remark}

\bibliographystyle{plain}
\bibliography{Survey}

\end{document}